\documentclass[11pt]{amsart}

\usepackage{amsmath,amssymb,amsthm}
\usepackage{geometry}
\usepackage{booktabs}
\usepackage{array}
\usepackage{hyperref}
\hypersetup{colorlinks=true, linkcolor=blue, citecolor=blue, urlcolor=blue}

\newtheorem{theorem}{Theorem}[section]
\newtheorem{lemma}[theorem]{Lemma}
\newtheorem{proposition}[theorem]{Proposition}
\newtheorem{corollary}[theorem]{Corollary}
\newtheorem{conjecture}[theorem]{Conjecture}
\newtheorem{definition}[theorem]{Definition}
\newtheorem{example}[theorem]{Example}
\newtheorem{remark}[theorem]{Remark}

\newcommand{\LE}[1]{\left\langle\!\!\left\langle #1 \right\rangle\!\!\right\rangle}
\newcommand{\EN}[1]{\left\langle #1 \right\rangle}

\title[Latin Eulerian Numbers]{Latin Eulerian Numbers}

\author{Madjid Mirzavaziri}
\address{\bf Department of Pure Mathematics, Ferdowsi University of Mashhad, Mashhad, Iran}
\email{mirzavaziri@um.ac.ir}
\author{Daniel Yaqubi$^{*}$}
\address{\bf $^*$Department of Computer science, University of Torbat-e Jam, Torbat-e Jam, Iran.}
\email{yaqubi@tjamcaas.ac.ir, or daniel\_yaqubi@yahoo.es}
\thanks{Corresponding author: Daniel Yaqubi}
\subjclass[2020]{05A05, 05A15, 05B15}
\keywords{Eulerian numbers, Latin squares, permutation statistics, ascents, descents}

\begin{document}
\begin{abstract}
We introduce \emph{Latin Eulerian numbers} $\LE{n\atop k_1,\dots,k_n}$, a multivariate refinement of classical Eulerian numbers counting order-$n$ Latin squares by column ascents. We establish their fundamental symmetries, univariate marginals, and an exact multiplicity divisibility property. For the total ascent statistic $\Sigma(L)=\sum_i k_i(L)$, we prove the sharp, isolated bounds $n-1 \le \Sigma(L) \le (n-1)^2$, demonstrating that the adjacent values $n$ and $(n-1)^2-1$ are strictly unattainable. To analyze intermediate values, we show that symbol permutations typically ignored in classical enumeration directly govern $\Sigma$ via an exact formula. This insight yields explicit constructions for the interior range, motivates a unimodality conjecture for the total ascent distribution.
\end{abstract}

\maketitle

\section{Introduction}
The \textit{Eulerian numbers} $\EN{n\atop k}$, which count permutations $\pi$ of $[n]=\{1,\dots,n\}$ with exactly $k$ ascents (positions $i$ with $\pi(i)<\pi(i+1)$), are among the most studied objects in enumerative combinatorics (see \cite{Comtet, Riordan, StanleyEC1, Petersen}). Over the years, numerous refinements and generalizations of $\EN{n\atop k}$ have been introduced, including the $q$-Eulerian numbers of Carlitz, the Eulerian numbers of type $B$, and second-order Eulerian numbers on multiset permutations.

Another classical and richly developed domain in combinatorics is the theory of \textit{Latin squares}. An order-$n$ Latin square is an $n\times n$ array with entries in $[n]$ such that every row and every column contains each symbol exactly once. Euler himself initiated the systematic study of orthogonal Latin squares through his $36$ officers problem~\cite{EulerOfficers}; the resulting theory of orthogonal Latin squares, transversal designs, and quasigroups is surveyed in the standard reference of D\'enes and Keedwell~\cite{DenesKeedwell}. The number $L_n$ of Latin squares of order $n$ is known exactly only for $n\le 11$~\cite{McKayWanless}, and no closed formula is known; the best asymptotic estimates come from the permanent-based approach following van der Waerden's conjecture (proved by Egorychev~\cite{Egorychev} and Falikman~\cite{Falikman}) and, more recently, from entropy methods of Linial and Luria~\cite{LinialLuria}.

It is natural to ask whether the two theories can be fruitfully combined. Because every column of a Latin square forms a permutation of $[n]$, one may refine the enumeration of Latin squares by recording the number of ascents in \emph{each} column simultaneously. This produces an $n$-tuple of Eulerian-type statistics attached to every Latin square, and hence a genuinely multivariate array generalizing $\EN{n\atop k}$. We call the resulting numbers \emph{Latin Eulerian numbers}. Besides Euler's own connection to both source concepts, the name is intended to signal that this array specializes, in a suitable sense, to $n$ independent copies of the classical Eulerian statistic, subject to the rigid combinatorial constraint of Latin-ness.

The purpose of this note is fourfold: (i) to define Latin Eulerian numbers and record their basic symmetries; (ii) to compute the array explicitly for $n \le 5$; (iii) to establish sharp, isolated bounds for the total ascent statistic $\Sigma = \sum_i k_i$ (Theorems~\ref{thm:bounds} and~\ref{thm:isolation}) via a row-pair comparison argument; and (iv) to analyze how classical isotopy symmetries (row, column, and symbol permutations) control intermediate values of $\Sigma$. Specifically, column permutations leave $\Sigma$ invariant (Proposition~\ref{prop:perm}), restricted row permutations yield an explicit number-theoretic construction (\S\ref{sec:interior}), and symbol permutations supply the decisive mechanism to bridge the remaining gap (Theorem~\ref{thm:symbol-general}). Finally, we present constructive evidence for a conjecture on interior coverage and unimodality (Conjecture~\ref{conj:interior}).
\section{Definitions and basic properties}

Throughout, $n\ge 2$ is an integer and $[n]=\{1,2,\dots,n\}$.

\begin{definition}
For a permutation $\pi$ of $[n]$, an \emph{ascent} of $\pi$ is an index $i\in\{1,\dots,n-1\}$ with $\pi(i)<\pi(i+1)$. We write $\mathrm{asc}(\pi)$ for the number of ascents of $\pi$, so $0\le \mathrm{asc}(\pi)\le n-1$ and $\EN{n\atop k}=\#\{\pi\in S_n:\mathrm{asc}(\pi)=k\}$.
\end{definition}

\begin{definition}
An (order-$n$) \emph{Latin square} is a matrix $L=(L(i,j))_{1\le i,j\le n}$ with entries in $[n]$ such that each row and each column of $L$ is a permutation of $[n]$. Let $\mathcal L_n$ denote the set of order-$n$ Latin squares, and $L_n=|\mathcal L_n|$.
\end{definition}

\begin{definition}\label{def:main}
For $L\in\mathcal L_n$ and $1\le i\le n$, let $c_i(L)=(L(1,i),L(2,i),\dots,L(n,i))$ be the $i$-th column of $L$, viewed as a permutation of $[n]$, and set $k_i(L)=\mathrm{asc}(c_i(L))\in\{0,\dots,n-1\}$. For a composition $\mathbf k=(k_1,\dots,k_n)\in\{0,\dots,n-1\}^n$, the \emph{Latin Eulerian number} $\LE{n\atop k_1,\dots,k_n}$ is
\[
\LE{n\atop k_1,\dots,k_n} \;=\; \#\bigl\{\,L\in\mathcal L_n \;:\; k_i(L)=k_i \text{ for all } i=1,\dots,n \,\bigr\}.
\]
\end{definition}

\begin{example}\label{ex:311}
For $n=3$ the Latin square $L=\begin{pmatrix}1&2&3\\3&1&2\\2&3&1\end{pmatrix}$
has columns $(1,3,2)$, $(2,1,3)$, $(3,2,1)$, with respectively $1$, $1$, $0$ ascents. Hence $L$ is one of the squares counted by $\LE{3\atop 1,1,0}$.
\end{example}
Now, we record three elementary but structurally important properties column-permutation symmetry (Proposition~\ref{prop:perm}), row-reversal symmetry (Proposition~\ref{prop:rev}), and total mass (Proposition~\ref{prop:sum}) of Latin Eulerian numbers. 
\begin{proposition}\label{prop:perm}
For every permutation $\tau$ of $[n]$ and every composition $\mathbf k$,
\[
\LE{n\atop k_1,\dots,k_n} \;=\; \LE{n\atop k_{\tau(1)},\dots,k_{\tau(n)}}.
\]
In particular, $\LE{n\atop k_1,\dots,k_n}$ depends only on the multiset $\{k_1,\dots,k_n\}$.
\end{proposition}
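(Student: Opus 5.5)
The plan is to exhibit an explicit bijection on $\mathcal L_n$ that permutes columns. Fix a permutation $\tau$ of $[n]$. Define $\Phi_\tau:\mathcal L_n\to\mathcal L_n$ by
\[
(\Phi_\tau L)(r,i) \;=\; L(r,\tau(i)), \qquad 1\le r,i\le n .
\]

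First I check that $\Phi_\tau L$ is again a Latin square. Column $i$ of $\Phi_\tau L$ is exactly column $\tau(i)$ of $L$, so it is a permutation of $[n]$. Row $r$ of $\Phi_\tau L$ is the row $r$ of $L$ with its entries rearranged by $\tau$, so it still contains each symbol exactly once.

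Next, $\Phi_\tau$ is a bijection, because $\Phi_{\tau^{-1}}$ is a two-sided inverse. The key observation is that $c_i(\Phi_\tau L)=c_{\tau(i)}(L)$ as sequences. Hence $k_i(\Phi_\tau L)=k_{\tau(i)}(L)$ for every $i$.

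Now take a composition $\mathbf k$. Then $\Phi_\tau L$ has ascent vector $(k_{\tau(1)},\dots,k_{\tau(n)})$ if and only if $L$ has ascent vector $(k_1,\dots,k_n)$. So $\Phi_\tau$ restricts to a bijection between the set counted by $\LE{n\atop k_1,\dots,k_n}$ and the set counted by $\LE{n\atop k_{\tau(1)},\dots,k_{\tau(n)}}$. This gives the claimed equality of cardinalities.

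The "in particular" statement follows because two compositions with the same multiset of entries differ by some permutation $\tau$ of positions.

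There is no real obstacle. The only point needing care is the indexing convention: that the ascent vector of $\Phi_\tau L$ is $(k_{\tau(1)},\dots,k_{\tau(n)})$ rather than the one indexed by $\tau^{-1}$. Either convention gives the result, since $\tau$ ranges over all of $S_n$.
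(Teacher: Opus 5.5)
Your proof is correct and is essentially the paper's argument: a column-permutation bijection on $\mathcal L_n$ that carries squares with one ascent vector to squares with the rearranged vector. The only difference is indexing. The paper sets $L'(\cdot,\tau(i))=L(\cdot,i)$, obtains the vector $(k_{\tau^{-1}(1)},\dots,k_{\tau^{-1}(n)})$, and then uses that $\tau^{-1}$ ranges over $S_n$; your convention $(\Phi_\tau L)(r,i)=L(r,\tau(i))$ gives $(k_{\tau(1)},\dots,k_{\tau(n)})$ directly.
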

\begin{proof}
Permuting the columns of $L$ according to $\tau$ (i.e. replacing $L$ by $L'$ with $L'(\cdot,\tau(i))=L(\cdot,i)$) is a bijection $\mathcal L_n\to\mathcal L_n$, and it carries a square with column-ascent vector $(k_1,\dots,k_n)$ to one with column-ascent vector $(k_{\tau^{-1}(1)},\dots,k_{\tau^{-1}(n)})$; since $\tau$ ranges over all of $S_n$ so does $\tau^{-1}$.
\end{proof}

\begin{proposition}\label{prop:rev}
The Latin Eulerian numbers satisfy the identity
\[
\LE{n\atop k_1,\dots,k_n} = \LE{n\atop n-1-k_1,\dots,n-1-k_n}
\]
for all $0 \le k_1, \dots, k_n \le n-1$.
\end{proposition}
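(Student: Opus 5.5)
The plan is to exhibit an involution on $\mathcal L_n$ that replaces every column's ascent count $k_i$ by $n-1-k_i$. The natural candidate is \emph{row reversal}: define $\rho(L)$ by $\rho(L)(r,j)=L(n+1-r,j)$ for $1\le r,j\le n$. This map permutes the rows of $L$, so each row of $\rho(L)$ is still a permutation of $[n]$. Each column of $\rho(L)$ is the reverse of the corresponding column of $L$, so it is again a permutation of $[n]$. Hence $\rho$ maps $\mathcal L_n$ into $\mathcal L_n$. Since $\rho\circ\rho=\mathrm{id}$, it is a bijection.

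The key step is the classical fact about reversal. For a permutation $\pi$ of $[n]$, let $\pi^{\mathrm r}(i)=\pi(n+1-i)$. Then position $i$ is an ascent of $\pi^{\mathrm r}$ exactly when $\pi(n+1-i)<\pi(n-i)$, that is, when position $n-i$ is a descent of $\pi$. The values of $\pi$ are distinct, so each of the $n-1$ adjacent positions is either an ascent or a descent of $\pi$, but not both. Therefore
\[
\mathrm{asc}(\pi^{\mathrm r})=\mathrm{des}(\pi)=(n-1)-\mathrm{asc}(\pi).
\]
Applying this column by column gives $k_j(\rho(L))=n-1-k_j(L)$ for every $j$.

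It follows that $\rho$ restricts to a bijection from the set of squares with column-ascent vector $(k_1,\dots,k_n)$ onto the set of squares with column-ascent vector $(n-1-k_1,\dots,n-1-k_n)$. The inverse is $\rho$ itself, so comparing cardinalities gives the identity.

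There is no serious obstacle here. The only point needing care is that the reversal must act on \emph{rows}, not on symbols or columns, so that the Latin property is preserved and every column is reversed as a sequence. Reversing the columns would leave each column's ascent count unchanged. As an alternative one could use the complement map on symbols, $L(i,j)\mapsto n+1-L(i,j)$, which also preserves Latin-ness and sends each column's ascent count to $n-1-k_i$. Either bijection proves the statement.
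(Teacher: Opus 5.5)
Your proposal is correct and follows the paper's own proof: row reversal is an involution of $\mathcal L_n$ that turns every column ascent into a descent, sending $k_i$ to $n-1-k_i$. Your symbol-complement alternative $L(i,j)\mapsto n+1-L(i,j)$ is also valid.
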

\begin{proof}
Reversing the order of the rows of $L$ (row $i\mapsto$ row $n+1-i$) is a bijection $\mathcal L_n\to\mathcal L_n$ which turns every ascent of every column into a descent and vice versa, replacing $k_i$ by $(n-1)-k_i$.
\end{proof}

\begin{proposition}\label{prop:sum}
The sum of the Latin Eulerian numbers over all possible ascent tuples equals the total number $L_n$ of Latin squares of order $n$, means
\[
\sum_{\mathbf{k}\in\{0,\dots,n-1\}^n} \LE{n\atop k_1,\dots,k_n} = L_n.
\]
\end{proposition}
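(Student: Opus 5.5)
The plan is to show that the sets counted by the Latin Eulerian numbers partition $\mathcal L_n$, and then to sum their sizes. For each $\mathbf k\in\{0,\dots,n-1\}^n$, write
\[
\mathcal L_n(\mathbf k)=\{L\in\mathcal L_n : k_i(L)=k_i \text{ for } i=1,\dots,n\}.
\]
By Definition~\ref{def:main}, $\LE{n\atop k_1,\dots,k_n}=|\mathcal L_n(\mathbf k)|$.

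The first step is to define the column-ascent map $\kappa:\mathcal L_n\to\{0,\dots,n-1\}^n$ by $\kappa(L)=(k_1(L),\dots,k_n(L))$. This map is well defined for two reasons. Each column $c_i(L)$ is a permutation of $[n]$ by the definition of a Latin square. The number of ascents of any permutation of $[n]$ lies in $\{0,\dots,n-1\}$, since ascents are indices in $\{1,\dots,n-1\}$.

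The second step is to identify the sets $\mathcal L_n(\mathbf k)$ as the fibres of this map. We have $\mathcal L_n(\mathbf k)=\kappa^{-1}(\mathbf k)$. The fibres of any function are pairwise disjoint, and their union is the whole domain. So the family $\{\mathcal L_n(\mathbf k)\}_{\mathbf k}$ is a partition of $\mathcal L_n$ into disjoint pieces, some of which may be empty.

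The third step is to sum over the partition. Finite additivity of cardinality gives
\[
\sum_{\mathbf k} \LE{n\atop k_1,\dots,k_n}=\sum_{\mathbf k}|\kappa^{-1}(\mathbf k)| = |\mathcal L_n| = L_n.
\]

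There is no real obstacle in this argument. The only point needing care is that the index set $\{0,\dots,n-1\}^n$ actually contains every value of $\kappa$, so that no Latin square is missed. This is exactly the observation $0\le \mathrm{asc}(\pi)\le n-1$ recorded in the first definition.
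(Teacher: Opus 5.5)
Your proof is correct and takes the same approach as the paper. The paper observes that each Latin square determines a unique ascent tuple in $\{0,\dots,n-1\}^n$; your argument that the fibres of the column-ascent map partition $\mathcal L_n$ is that same observation written out in full.
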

\begin{proof}
This follows immediately, as every Latin square of order $n$ uniquely determines its ascent tuple $\mathbf{k} \in \{0, \dots, n-1\}^n$.
\end{proof}
Proposition~\ref{prop:sum} is the sense in which Latin Eulerian numbers refine $L_n$, exactly as $\EN{n\atop k}$ refines $n!=\sum_k \EN{n\atop k}$; since no closed form for $L_n$ is known, none should be expected in general for $\LE{n\atop \mathbf k}$ either, and we do not attempt one. We instead study the array computationally for small $n$ and analytically through the derived statistic $\Sigma(L)=\sum_i k_i(L)$.

Our final basic property establishes that the univariate marginals of the Latin Eulerian array coincide with the classical Eulerian numbers, scaled by the factor $L_n / n!$.
\begin{theorem}\label{thm:marginal}
Fix a column index $i\in\{1,\dots,n\}$ and $k\in\{0,\dots,n-1\}$. Then
\[
\sum_{\substack{\mathbf k'\,:\,k'_i=k}} \LE{n\atop \mathbf k'} \;=\; \frac{L_n}{n!}\,\EN{n\atop k}.
\]
\end{theorem}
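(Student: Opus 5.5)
The left-hand side counts the Latin squares $L\in\mathcal L_n$ whose $i$-th column $c_i(L)$ has exactly $k$ ascents. I will show that the map $\Phi_i:\mathcal L_n\to S_n$, $L\mapsto c_i(L)$, has all fibres of the same size $L_n/n!$. Then
\[
\sum_{\mathbf k':k'_i=k}\LE{n\atop \mathbf k'}=\sum_{\pi\in S_n,\ \mathrm{asc}(\pi)=k}|\Phi_i^{-1}(\pi)|=\frac{L_n}{n!}\EN{n\atop k}.
\]
The whole proof therefore reduces to the equidistribution of a single column over $S_n$.

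For the equal-fibre claim I will use symbol relabelling rather than row permutation. Row permutations also act transitively on columns, but symbol relabelling is cleaner here. For $\sigma\in S_n$, let $\sigma\cdot L$ be the square with entries $(\sigma\cdot L)(r,j)=\sigma(L(r,j))$.

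\begin{itemize}
\item Relabelling symbols by a bijection of $[n]$ preserves the property that every row and column is a permutation, so $L\mapsto\sigma\cdot L$ is a bijection of $\mathcal L_n$ with inverse $L\mapsto\sigma^{-1}\cdot L$.
\item It satisfies $c_i(\sigma\cdot L)=\sigma\circ c_i(L)$.
\item Given $\pi,\pi'\in S_n$, set $\sigma=\pi'\pi^{-1}$. Then $L\mapsto\sigma\cdot L$ restricts to a bijection $\Phi_i^{-1}(\pi)\to\Phi_i^{-1}(\pi')$.
\end{itemize}

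Hence all $n!$ fibres have a common size $m$, and $m\cdot n!=L_n$ by Proposition~\ref{prop:sum}, so $m=L_n/n!$. Every fibre is nonempty, for example by relabelling the cyclic square, though this is not even needed for the count.

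I do not expect a real obstacle. The only point needing care is the composition convention: a column $c_i(L)$, read as the function $r\mapsto L(r,i)$, is transformed by post-composition with $\sigma$. This is a left action, and it is transitive on $S_n$, so any $\pi$ can be sent to any $\pi'$. Note that the ascent statistic itself is not preserved by this relabelling. That is harmless, because the argument only uses the transport of fibres of $\Phi_i$ and never needs ascents to be invariant.
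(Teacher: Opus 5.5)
Your proof is correct and follows the same plan as the paper: reduce the identity to showing that column $i$ is equidistributed over $S_n$, using bijections between the fibres. The only difference is the bijection. The paper permutes rows, $L'(j,c)=L(\rho(j),c)$ with $\rho=\sigma^{-1}\sigma'$, which pre-composes the column. You relabel symbols by $\pi'\pi^{-1}$, which post-composes it. Each action is simply transitive on the possible columns, so either works equally well.
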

\begin{proof}
It suffices to show that, among the $L_n$ Latin squares of order $n$, each of the $n!$ permutations of $[n]$ occurs as column $i$ equally often, namely $L_n/n!$ times; summing this over the $\EN{n\atop k}$ permutations with exactly $k$ ascents gives the stated formula.

Fix two permutations $\sigma,\sigma'\in S_n$ and let $\rho=\sigma^{-1}\sigma'$. Given a Latin square $L$ with column $i$ equal to $\sigma$, define $L'$ by $L'(j,c)=L(\rho(j),c)$ for all rows $j$ and columns $c$; this is again a Latin square, since reordering the rows of a Latin square (here, by the permutation $\rho$ of row indices) preserves the Latin property. Its $i$-th column satisfies $L'(j,i)=L(\rho(j),i)=\sigma(\rho(j))=\sigma(\sigma^{-1}\sigma'(j))=\sigma'(j)$, so column $i$ of $L'$ is exactly $\sigma'$. The map $L\mapsto L'$ is invertible (apply the same construction with $\rho^{-1}=\sigma'^{-1}\sigma$), so it is a bijection between $\{L\in\mathcal L_n : \text{column }i\text{ of }L=\sigma\}$ and $\{L\in\mathcal L_n:\text{column }i\text{ of }L=\sigma'\}$. As $\sigma,\sigma'$ were arbitrary, all $n!$ classes have equal size, and since they partition $\mathcal L_n$, each has size $L_n/n!$.
\end{proof}
%

\section{The array for $n\le 5$}

Using an exhaustive backtracking search over Latin squares (equivalently, over sequences of rows extending a partial Latin rectangle), we computed $\LE{n\atop\mathbf k}$ completely for $n=3,4,5$. By Proposition~\ref{prop:perm} it suffices to tabulate one representative composition per multiset.

For $n=3$ ($L_3=12$) there are exactly two nonzero multisets, related by Proposition~\ref{prop:rev}:
\[
\LE{3\atop 2,1,1}=\LE{3\atop 1,2,1}=\LE{3\atop 1,1,2}=2,\qquad
\LE{3\atop 0,1,1}=\LE{3\atop 1,0,1}=\LE{3\atop 1,1,0}=2.
\]
Notably, the ``balanced'' composition $(1,1,1)$ does \emph{not} occur among order-$3$ Latin squares.

For $n=4$ ($L_4=576$), the nonzero multisets and their common value (constant on each multiset, by Proposition~\ref{prop:perm}) are shown in Table~\ref{tab:n4}.

\begin{table}[h]
\centering
\begin{tabular}{@{}lccc@{}}
\toprule
Multiset $\{k_1,k_2,k_3,k_4\}$ & value & \# compositions & subtotal \\
\midrule
$\{0,1,1,1\}$ & $6$ & $4$ & $24$ \\
$\{1,1,1,2\}$ & $6$ & $4$ & $24$ \\
$\{1,1,1,3\}$ & $6$ & $4$ & $24$ \\
$\{0,2,2,2\}$ & $6$ & $4$ & $24$ \\
$\{1,2,2,2\}$ & $6$ & $4$ & $24$ \\
$\{2,2,2,3\}$ & $6$ & $4$ & $24$ \\
$\{0,1,2,3\}$ & $2$ & $24$ & $48$ \\
$\{1,1,2,2\}$ & $64$ & $6$ & $384$ \\
\midrule
Total & & $54$ & $576$ \\
\bottomrule
\end{tabular}
\caption{Latin Eulerian numbers for $n=4$.}
\label{tab:n4}
\end{table}

For $n=5$ ($L_5=161280$) there are $39$ nonzero multisets; the data are consistent with Propositions~\ref{prop:perm} and \ref{prop:sum}, in every case (each multiset gives a constant value on all of its arrangements, reversal-paired multisets share the same value, and all subtotals sum to $161280$). The most balanced composition, $(2,2,2,2,2)$, is the unique maximiser among individual compositions, with value $6480$.

The values in Table~\ref{tab:n4} are conspicuously divisible by small numbers ($6$ appears six times, $64=2^6$, and so on); we now show this is not a coincidence but an exact structural fact, valid for every $n$ and every composition.

For a composition $\kappa = (k_1, \dots, k_n)$, let $m_1, \dots, m_r$ denote the multiplicities of its $r$ distinct entries, so that $m_1 + \cdots + m_r = n$.
\begin{theorem}\label{thm:divisibility}
For every composition $\kappa=(k_1,\dots,k_n)$ with multiplicities $m_1,\dots,m_r$,
\[
\prod_{i=1}^r m_i! \;\Big|\; \LE{n\atop\kappa}.
\]
\end{theorem}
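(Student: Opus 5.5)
We use a free group action on the set being counted. Let $G=\mathrm{Stab}(\kappa)\le S_n$ be the group of column permutations $\tau$ with $k_{\tau(i)}=k_i$ for all $i$. This group is the Young subgroup $S_{m_1}\times\cdots\times S_{m_r}$, of order $\prod_i m_i!$. Let $\mathcal A_\kappa=\{L\in\mathcal L_n: k_i(L)=k_i\ \forall i\}$, so $|\mathcal A_\kappa|=\LE{n\atop\kappa}$. The claim follows from the orbit--stabilizer theorem once we show that $G$ acts on $\mathcal A_\kappa$ by permuting columns and that this action is free.

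\emph{Step 1: $G$ acts on $\mathcal A_\kappa$.} Use the column-permutation bijection from the proof of Proposition~\ref{prop:perm}: $(\tau\cdot L)(\cdot,\tau(i))=L(\cdot,i)$. This sends a square with ascent vector $(k_1,\dots,k_n)$ to one with ascent vector $(k_{\tau^{-1}(1)},\dots,k_{\tau^{-1}(n)})$. For $\tau\in G$ (and hence $\tau^{-1}\in G$) this vector equals $\kappa$, so $\tau\cdot L\in\mathcal A_\kappa$. The action axioms are immediate.

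\emph{Step 2: the action is free.} Suppose $\tau\cdot L=L$ with $\tau\neq\mathrm{id}$. Choose $i$ with $\tau(i)=j\ne i$. Then column $j$ of $L$ equals column $i$ of $L$. In particular $L(1,i)=L(1,j)$ with $i\neq j$, which repeats a symbol in row $1$ and contradicts the Latin property. So every stabilizer is trivial. This works because distinct columns of a Latin square are distinct (they even differ in every row).

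\emph{Step 3: conclude.} Since the action is free, every orbit has size $|G|=\prod_i m_i!$. The orbits partition $\mathcal A_\kappa$, so $\prod_i m_i!$ divides $|\mathcal A_\kappa|=\LE{n\atop\kappa}$.

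The main point to check is Step 2, the freeness, and it is immediate from the Latin condition. A secondary check is that the direction convention for $\tau$ in Proposition~\ref{prop:perm} really maps $\mathcal A_\kappa$ to itself. This holds because $G$ is closed under inverses. As a sanity check against Table~\ref{tab:n4}, the multiset $\{1,1,2,2\}$ has multiplicities $2,2$, and $4$ divides $64$; the multiset $\{0,1,1,1\}$ has multiplicities $1,3$, and $6$ divides $6$.
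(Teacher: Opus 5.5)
Your proof is correct and follows the paper's argument: the Young subgroup $S_{m_1}\times\cdots\times S_{m_r}$ stabilizing $\kappa$ acts on the fibre by column permutations, freely because distinct columns of a Latin square are distinct, so the fibre splits into orbits of size $\prod_i m_i!$. The only cosmetic difference is that the paper proves freeness for the full $S_n$ action and then restricts to the subgroup, whereas you check trivial stabilizers for the subgroup directly.
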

\begin{proof}
If $\LE{n\atop\kappa}=0$ there is nothing to prove, so suppose $L$ is a Latin square with column-ascent vector $\kappa$. For $\tau\in S_n$ let $\tau\cdot L$ be the Latin square with $(\tau\cdot L)(i,j)=L(i,\tau^{-1}(j))$, i.e.\ column $j$ of $\tau\cdot L$ is column $\tau^{-1}(j)$ of $L$; this is a group action of $S_n$ on the set of order-$n$ Latin squares by column permutation. The action is \emph{free}: if $\tau\cdot L=\sigma\cdot L$ then column $\tau^{-1}(j)$ of $L$ equals column $\sigma^{-1}(j)$ of $L$ for every $j$, and since the $n$ columns of a Latin square are pairwise distinct (two equal columns would force a repeated entry in some row), this forces $\tau^{-1}=\sigma^{-1}$, i.e.\ $\tau=\sigma$.

The column-ascent vector of $\tau\cdot L$ is $(k_{\tau^{-1}(1)},\dots,k_{\tau^{-1}(n)})$. Let $H\le S_n$ be the subgroup of permutations that fix the tuple $\kappa$ under this action, i.e.\ that permute only among positions sharing a common value; $H$ is the Young subgroup $S_{m_1}\times\cdots\times S_{m_r}$, of order $\prod_i m_i!$. By freeness, the orbit $H\cdot L$ consists of exactly $|H|=\prod_i m_i!$ distinct Latin squares, all with column-ascent vector $\kappa$.

The same argument applies to any Latin square $L'$ with column-ascent vector $\kappa$: its $H$-orbit again has size exactly $\prod_i m_i!$ and lies entirely within the fibre $\{L': \text{column-ascent vector of }L' \text{ is } \kappa\}$. Two such $H$-orbits are either equal or disjoint, so this fibre of size $\LE{n\atop\kappa}$ is partitioned into blocks of size $\prod_i m_i!$, proving the divisibility.
\end{proof}
\begin{definition}\label{def:normalized}
For an achievable composition $\kappa = (k_1, \dots, k_n)$ with entry multiplicities $m_1, \dots, m_r$, the normalized Latin Eulerian number $O(n;\kappa)$ is defined by
\[
O(n;\kappa) = \frac{\LE{n\atop\kappa}}{\prod_{i=1}^r m_i!}.
\]
\end{definition}
Unwinding the proof of Theorem~\ref{thm:divisibility}, $O(n;\kappa)$ has a direct meaning: it is the number of orbits of the full column-permutation action of $S_n$ on the set of Latin squares whose column-ascent multiset equals that of $\kappa$-equivalently, the number of Latin squares with that ascent-partition, counted up to reordering of columns. This separates $\LE{n\atop\kappa}$ into an elementary, fully understood factor $\prod_i m_i!$ and a residual quantity $O(n;\kappa)$ which captures whatever is genuinely difficult about the count.

\begin{table}[h]
\centering
\small
\begin{tabular}{@{}l r r r@{}}
\toprule
Multiset & $\LE{5\atop\kappa}$ & $\prod_i m_i!$ & $O(5;\kappa)$ \\
\midrule
$\{0,1,1,1,1\}$ & $24$ & $24$ & $1$ \\
$\{0,1,1,2,3\}$ & $8$ & $2$ & $4$ \\
$\{0,1,1,3,3\}$ & $12$ & $4$ & $3$ \\
$\{0,1,2,2,2\}$ & $72$ & $6$ & $12$ \\
$\{0,1,2,2,3\}$ & $24$ & $2$ & $12$ \\
$\{0,1,2,3,3\}$ & $4$ & $2$ & $2$ \\
$\{0,2,2,2,2\}$ & $72$ & $24$ & $3$ \\
$\{0,2,2,2,3\}$ & $48$ & $6$ & $8$ \\
$\{0,2,2,3,3\}$ & $28$ & $4$ & $7$ \\
$\{0,2,3,3,3\}$ & $24$ & $6$ & $4$ \\
$\{1,1,1,1,2\}$ & $24$ & $24$ & $1$ \\
$\{1,1,1,1,3\}$ & $48$ & $24$ & $2$ \\
$\{1,1,1,2,2\}$ & $528$ & $12$ & $44$ \\
$\{1,1,1,2,3\}$ & $156$ & $6$ & $26$ \\
$\{1,1,1,2,4\}$ & $24$ & $6$ & $4$ \\
$\{1,1,1,3,3\}$ & $72$ & $12$ & $6$ \\
$\{1,1,2,2,2\}$ & $1140$ & $12$ & $95$ \\
$\{1,1,2,2,3\}$ & $552$ & $4$ & $138$ \\
$\{1,1,2,2,4\}$ & $28$ & $4$ & $7$ \\
$\{1,1,2,3,3\}$ & $120$ & $4$ & $30$ \\
$\{1,1,2,3,4\}$ & $4$ & $2$ & $2$ \\
$\{1,1,3,3,3\}$ & $72$ & $12$ & $6$ \\
$\{1,1,3,3,4\}$ & $12$ & $4$ & $3$ \\
$\{1,2,2,2,2\}$ & $2784$ & $24$ & $116$ \\
$\{1,2,2,2,3\}$ & $1752$ & $6$ & $292$ \\
$\{1,2,2,2,4\}$ & $48$ & $6$ & $8$ \\
$\{1,2,2,3,3\}$ & $552$ & $4$ & $138$ \\
$\{1,2,2,3,4\}$ & $24$ & $2$ & $12$ \\
$\{1,2,3,3,3\}$ & $156$ & $6$ & $26$ \\
$\{1,2,3,3,4\}$ & $8$ & $2$ & $4$ \\
$\{1,3,3,3,3\}$ & $48$ & $24$ & $2$ \\
$\{2,2,2,2,2\}$ & $6480$ & $120$ & $54$ \\
$\{2,2,2,2,3\}$ & $2784$ & $24$ & $116$ \\
$\{2,2,2,2,4\}$ & $72$ & $24$ & $3$ \\
$\{2,2,2,3,3\}$ & $1140$ & $12$ & $95$ \\
$\{2,2,2,3,4\}$ & $72$ & $6$ & $12$ \\
$\{2,2,3,3,3\}$ & $528$ & $12$ & $44$ \\
$\{2,3,3,3,3\}$ & $24$ & $24$ & $1$ \\
$\{3,3,3,3,4\}$ & $24$ & $24$ & $1$ \\
\bottomrule
\end{tabular}
\caption{Theorem~\ref{thm:divisibility} verified exhaustively for all $39$ nonzero compositions at $n=5$: $\prod_i m_i!$ divides $\LE{5\atop\kappa}$ in every case, with quotient $O(5;\kappa)$.}
\label{tab:orbits5}
\end{table}

Table~\ref{tab:orbits5} confirms Theorem~\ref{thm:divisibility} without exception, and shows that $O(n;\kappa)$ varies over a wide range even at $n=5$: it equals $1$ for six compositions (meaning every Latin square with that ascent-partition is a column permutation of a single example), while the most balanced partition $\{2,2,2,2,2\}$ has $54$ orbits and the largest value, $\{1,2,2,2,3\}$, has $292$. Reversal-paired partitions (Proposition~\ref{prop:rev}) always share the same value of $O$, as the table shows (e.g.\ $\{0,1,1,1,1\}$ and $\{2,3,3,3,3\}$ both give $O=1$). We regard $O(n;\kappa)$, rather than $\LE{n\atop\kappa}$ itself, as the natural object for further study: it isolates exactly the part of the enumeration not explained by the elementary column-relabelling symmetry.

\section{The total ascent statistic}
\begin{definition}
For a Latin square $L \in \mathcal{L}_n$, let $\Sigma(L) = \sum_{i=1}^n k_i(L)$ denote the total number of ascents across all columns of $L$. The distribution of total ascents is given by
\[
T_n(m) = \sum_{\substack{\mathbf{k} : \sum k_i = m}} \LE{n\atop\mathbf{k}} = \#\{L \in \mathcal{L}_n : \Sigma(L) = m\}.
\]
\end{definition}

By row-reversal symmetry (Proposition~\ref{prop:rev}), the sequence satisfies $T_n(m) = T_n\bigl(n(n-1) - m\bigr)$, so $T_n(m)$ is symmetric about $\frac{1}{2}n(n-1)$.
\begin{theorem}\label{thm:bounds}
For every $L\in\mathcal L_n$, $n-1 \;\le\; \Sigma(L) \;\le\; (n-1)^2 .$
Consequently $T_n(m)=0$ whenever $m<n-1$ or $m>(n-1)^2$, and both bounds are attained.
\end{theorem}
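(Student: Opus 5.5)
The plan is to rewrite $\Sigma(L)$ as a sum over adjacent row pairs instead of over columns, and then bound each pair's contribution separately. By definition,
\[
\Sigma(L)=\sum_{c=1}^n\#\{j\in[n-1]: L(j,c)<L(j+1,c)\}=\sum_{j=1}^{n-1} a_j(L),\qquad a_j(L)=\#\{c: L(j,c)<L(j+1,c)\},
\]
so $a_j(L)$ is the number of columns in which row $j+1$ beats row $j$.

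The key step is to show $1\le a_j(L)\le n-1$ for each $j$. Rows $j$ and $j+1$ are permutations of $[n]$. Since each column is also a permutation, the two rows disagree in every column. Let $c^*$ be the column where $L(j+1,c^*)=n$. Then $L(j,c^*)\neq n$, so $L(j,c^*)<n$ and column $c^*$ is an ascent; hence $a_j\ge 1$. Symmetrically, let $c_*$ be the column where $L(j,c_*)=n$. There $L(j+1,c_*)<n$, so column $c_*$ is a descent and $a_j\le n-1$. Summing over the $n-1$ adjacent pairs gives $n-1\le\Sigma(L)\le(n-1)^2$, and so $T_n(m)=0$ outside this range.

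For attainment of the lower bound, I would use the cyclic square $L(i,c)\equiv c-i \pmod n$, with representatives in $[n]$. Row $i+1$ is obtained from row $i$ by replacing each entry $a$ with $a-1$, except that $1$ is replaced by $n$. Hence row $i+1$ is larger than row $i$ in exactly one column, the one where row $i$ has a $1$. So every $a_i=1$ and $\Sigma=n-1$.

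For the upper bound, I would not build a second construction. The row-reversal bijection in the proof of Proposition~\ref{prop:rev} sends $\Sigma$ to $n(n-1)-\Sigma$, and applying it to the cyclic square gives $\Sigma=(n-1)^2$. There is no real obstacle here. The only idea needed is the switch from columns to row pairs, together with tracking where the symbol $n$ sits in each of the two rows.
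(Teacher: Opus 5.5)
Your proof is correct and follows the paper's route: you rewrite $\Sigma(L)=\sum_{j=1}^{n-1}a_j(L)$, show $1\le a_j(L)\le n-1$ for each adjacent row pair, and realize both extremes with a cyclic square and its row reversal (you build the minimum directly and reflect to get the maximum, while the paper does the opposite). The one local difference is how you get an ascent and a descent in each row pair: you locate the symbol $n$ in rows $j$ and $j+1$, whereas the paper uses equality of the row sums. Both are equally short, but the paper's sum argument is the one it later sharpens in Lemma~\ref{lem:rigid} to characterize $a_i(L)=1$.
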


\begin{proof}
Fix $L\in\mathcal L_n$ and, for $1\le i\le n-1$, compare row $i$ and row $i+1$ columnwise. Since $L$ is a Latin square, $L(i,j)\ne L(i+1,j)$ for every column $j$ (two entries in the same column are always distinct). Moreover
\[
\sum_{j=1}^n L(i,j) \;=\; \sum_{j=1}^n L(i+1,j) \;=\; 1+2+\cdots+n,
\]
because each row is a permutation of $[n]$. If $L(i,j)<L(i+1,j)$ held for every $j$, the left sum would be strictly less than the right sum, a contradiction; likewise $L(i,j)>L(i+1,j)$ cannot hold for every $j$. Hence, among the $n$ columns, comparing row $i$ to row $i+1$ produces \emph{at least one ascent and at least one descent}.

Summing over the $n-1$ adjacent row pairs $i=1,\dots,n-1$, and noting that
\[
\Sigma(L)=\sum_{i=1}^n k_i(L)=\sum_{j=1}^{n-1}\#\{\text{columns ascending from row }j\text{ to row }j+1\},
\]
(the two ways of summing the same $n\times(n-1)$ array of ascent/descent indicators), we get $\Sigma(L)\ge n-1$ from ``at least one ascent per row-pair'', and, since each row-pair also contributes at least one descent, the number of descents $n(n-1)-\Sigma(L)$ is at least $n-1$, i.e. $\Sigma(L)\le (n-1)^2$.

For attainment, let $C(i,j)=(i+j)\bmod n$ (indices $i,j\in\{0,\dots,n-1\}$, symbols shifted to $[n]$ by adding $1$) be the standard cyclic Latin square. Column $j=0$ increases strictly from row $0$ to row $n-1$, contributing $n-1$ ascents; each column $j\ge 1$ increases strictly except for a single wraparound, contributing $n-2$ ascents. Hence $\Sigma(C)=(n-1)+(n-1)(n-2)=(n-1)^2$. Reversing the row order of $C$ gives a Latin square with $\Sigma=n(n-1)-(n-1)^2=n-1$.
\end{proof}

Theorem~\ref{thm:bounds} is confirmed exactly by the exhaustive data of Section~3: the minimal and maximal values of $\Sigma$ for $n=3,4,5$ are $(2,4)$, $(3,9)$, $(4,16)$, matching $(n-1,(n-1)^2)$ in every case.

\subsection{A gap phenomenon}

The exhaustive computation reveals a sharper and, to our knowledge, unexplained phenomenon: the two extreme values of $\Sigma$ established in Theorem~\ref{thm:bounds} are \emph{isolated}.

\begin{table}[h]
\centering
\begin{tabular}{@{}c|l@{}}
\toprule
$n$ & values of $m$ with $T_n(m)>0$ \\
\midrule
$3$ & $2,\ 4$ \\
$4$ & $3,\ 5,6,7,\ 9$ \\
$5$ & $4,\ 6,7,8,9,10,11,12,13,14,\ 16$ \\
\bottomrule
\end{tabular}
\caption{Support of $T_n$ for $n=3,4,5$. In every case $T_n(n)=T_n((n-1)^2-1)=0$.}
\label{tab:gap}
\end{table}

For $n=5$, the ``bulk'' values $m=6,\dots,14$ carry counts
\[
120,\ 7440,\ 16680,\ 32880,\ 46800,\ 32880,\ 16680,\ 7440,\ 120,
\]
which are symmetric about $m=10$ and unimodal, while the two extreme values $m=4$ and $m=16$ carry the comparatively tiny count $120$ each, with $T_5(5)=T_5(15)=0$ separating them from the bulk. We now show that this isolation of the extremes is not a low-order artefact but a theorem valid for every $n$.

For $L\in\mathcal L_n$ and $1\le i\le n-1$, let
\[
a_i(L)\;=\;\#\{\,c\in[n]\;:\;L(i,c)<L(i+1,c)\,\}
\]
be the number of columns that ascend between row $i$ and row $i+1$. As observed in the proof of Theorem~\ref{thm:bounds}, $1\le a_i(L)\le n-1$ for every $i$, and $\Sigma(L)=\sum_{i=1}^{n-1}a_i(L)$; this is simply the same array of ascent/descent indicators as $\sum_i k_i(L)$, summed the other way.

It is convenient to identify the symbol set $[n]$ with $\mathbb Z_n=\{0,1,\dots,n-1\}$ via $n\leftrightarrow 0$, so that ``$+1$'' means the successor in $[n]$ with $n+1$ read as $1$.

\begin{lemma}\label{lem:rigid}
If $a_i(L)=1$ for some $L\in\mathcal L_n$ and some $1\le i\le n-1$, then
\[
\mathrm{row}_i(L)(c) \;\equiv\; \mathrm{row}_{i+1}(L)(c) + 1 \pmod n \qquad\text{for every column } c.
\]
\end{lemma}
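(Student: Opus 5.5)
The plan is to turn the two rows into a single permutation of the symbols and then use the same sum argument as in the proof of Theorem~\ref{thm:bounds}, now with an equality case. Write $x=\mathrm{row}_i(L)$ and $y=\mathrm{row}_{i+1}(L)$, both permutations of $[n]$ indexed by columns. Define $f:[n]\to[n]$ by $f(y(c))=x(c)$; this is well defined and a bijection, since $y$ and $x$ are bijections from columns to symbols. Because $L(i,c)\ne L(i+1,c)$ for every $c$, $f$ has no fixed points. In terms of symbols, column $c$ ascends from row $i$ to row $i+1$ exactly when $x(c)<y(c)$, i.e.\ when $f(s)<s$ for $s=y(c)$. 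So the hypothesis $a_i(L)=1$ says there is exactly one symbol $s_0$ with $f(s_0)<s_0$, and every other $s$ satisfies $f(s)>s$, hence $f(s)\ge s+1$.

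The key step is the sum identity $\sum_{s\in[n]}(f(s)-s)=0$, which holds because $f$ is a permutation. This is the same observation as the equal row sums in Theorem~\ref{thm:bounds}. Splitting off $s_0$ gives
\[
s_0-f(s_0)\;=\;\sum_{s\ne s_0}\bigl(f(s)-s\bigr)\;\ge\;n-1 .
\]
Since $1\le f(s_0)<s_0\le n$, the left side is at most $n-1$. Hence equality holds throughout, which forces three things:
\begin{itemize}
\item[(i)] $s_0=n$;
\item[(ii)] $f(n)=1$;
\item[(iii)] $f(s)=s+1$ for every $s\ne n$.
\end{itemize}
Translating back via $s=y(c)$, we get $x(c)=y(c)+1$ whenever $y(c)\ne n$, and $x(c)=1$ when $y(c)=n$. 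Under the identification $[n]\cong\mathbb Z_n$ with $n\leftrightarrow 0$, this is exactly $\mathrm{row}_i(L)(c)\equiv\mathrm{row}_{i+1}(L)(c)+1\pmod n$ for all $c$.

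I expect no real obstacle. The only points needing care are the direction convention (an ascent is $L(i,c)<L(i+1,c)$, which corresponds to $f(s)<s$ and not $f(s)>s$) and the observation that the inequality chain must be tight. Passing to the symbol permutation $f$ is what makes the counting clean, because the comparison is then between a permutation and the identity rather than between two arbitrary rows.
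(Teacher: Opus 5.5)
Your proof is correct and is essentially the paper's argument. The paper works directly with $d(c)=L(i,c)-L(i+1,c)$ over columns, and your $f(s)-s$ at $s=y(c)$ is exactly $d(c)$, so passing to the symbol permutation $f$ is only a reindexing; the zero-sum identity, the tight chain $n-1\le s_0-f(s_0)\le n-1$, and the resulting equality cases match the paper step for step.
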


\begin{proof}
Write $d(c)=L(i,c)-L(i+1,c)$ (ordinary integer subtraction, symbols in $[n]$). Since $L$ is a Latin square, $d(c)\ne 0$ for every $c$, and since row $i$ and row $i+1$ are both permutations of $[n]$, $\sum_c d(c)=0$. By hypothesis exactly one column $c^\ast$ has $d(c^\ast)<0$ (an ascent) and the remaining $n-1$ columns have $d(c)>0$ (descents). Hence, $-d(c^\ast)\;=\;\sum_{c\ne c^\ast} d(c)\;\ge\; n-1,$
because each of the $n-1$ terms on the right is a positive integer. But $d(c^\ast)\ge 1-n$ trivially (both entries lie in $[n]$), so $-d(c^\ast)\le n-1$ as well; therefore $-d(c^\ast)=n-1$ exactly, forcing $L(i,c^\ast)=1$, $L(i+1,c^\ast)=n$, and forcing \emph{every} term $d(c)$, $c\ne c^\ast$, to equal $1$ (since $n-1$ positive integers summing to exactly $n-1$ must all equal $1$). Thus $L(i,c)=L(i+1,c)+1$ for $c\ne c^\ast$, and at $c^\ast$ we have $L(i+1,c^\ast)=n\equiv 0$ and $L(i,c^\ast)=1\equiv 0+1$, so the same relation $L(i,c)\equiv L(i+1,c)+1 \pmod n$ holds at $c^\ast$ too.
\end{proof}

\begin{theorem}\label{thm:isolation}
For every $n\ge 3$, $T_n(n)=0$ and $T_n\bigl((n-1)^2-1\bigr)=0$.
\end{theorem}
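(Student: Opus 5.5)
The plan is to prove $T_n(n)=0$ directly from Lemma~\ref{lem:rigid}. The second statement then follows from the row-reversal symmetry $T_n(m)=T_n\bigl(n(n-1)-m\bigr)$ derived from Proposition~\ref{prop:rev}, because $n(n-1)-n=n^2-2n=(n-1)^2-1$.

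Suppose $\Sigma(L)=n$. Write $\Sigma(L)=\sum_{i=1}^{n-1}a_i(L)$, where each $a_i(L)\ge 1$ by the proof of Theorem~\ref{thm:bounds}. There are $n-1$ summands with total $n$, so exactly one index $j$ has $a_j(L)=2$, and $a_i(L)=1$ for all $i\neq j$.

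Apply Lemma~\ref{lem:rigid} to every $i\ne j$, working in $\mathbb Z_n$ as above.
\begin{itemize}
\item For $1\le i\le j$, chaining the relations gives $\mathrm{row}_i(L)\equiv \mathrm{row}_j(L)+(j-i)$.
\item For $j+1\le i\le n$, it gives $\mathrm{row}_i(L)\equiv \mathrm{row}_{j+1}(L)-(i-j-1)$.
\end{itemize}
Now fix a column $c$ and set $y=L(j,c)$ and $x=L(j+1,c)$. Column $c$ then consists of two blocks of consecutive residues:
\begin{itemize}
\item $\{y,y+1,\dots,y+j-1\}$, of size $j$, from rows $1,\dots,j$;
\item $\{x-(n-j-1),\dots,x-1,x\}$, of size $n-j$, from rows $j+1,\dots,n$.
\end{itemize}
Since the column is a permutation, these blocks partition $\mathbb Z_n$. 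The complement of the first block is the cyclic interval $\{y+j,\dots,y-1\}$, whose top endpoint is $y-1$. Comparing top endpoints forces $x\equiv y-1$, that is, $L(j,c)\equiv L(j+1,c)+1 \pmod n$ for every column $c$.

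This relation determines the comparison between rows $j$ and $j+1$ completely. In every column, $L(j,c)=L(j+1,c)+1$ (a descent), except in the unique column where $L(j+1,c)=n$ and $L(j,c)=1$ (an ascent). Hence $a_j(L)=1$, contradicting $a_j(L)=2$, and so $T_n(n)=0$.

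The main point needing care is the cyclic-interval step: the two blocks must be read as arcs of $\mathbb Z_n$, with the wraparound between $n$ and $1$ handled through the identification $n\leftrightarrow 0$. The degenerate cases $j=1$ and $j=n-1$, where one block is a single residue, need a quick check, but the same argument applies to them. The hypothesis $n\ge 3$ is used only so that both values $n$ and $(n-1)^2-1$ are meaningful and distinct from the extreme values attained in Theorem~\ref{thm:bounds}.
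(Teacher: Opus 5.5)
Your proof is correct and follows the paper's argument step for step. You force a single index $j$ with $a_j(L)=2$, chain Lemma~\ref{lem:rigid} on both sides of it, read each column as two complementary cyclic arcs to get $\mathrm{row}_j\equiv\mathrm{row}_{j+1}+1 \pmod n$ (which contradicts $a_j(L)=2$), and obtain $T_n\bigl((n-1)^2-1\bigr)=0$ from row-reversal symmetry, exactly as the paper does, with your remarks on arc endpoints and the cases $j=1$, $j=n-1$ adding only a little extra care.
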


\begin{proof}
Since $1\le a_i(L)\le n-1$ for each of the $n-1$ indices $i$, and $\Sigma(L)=\sum_i a_i(L)$, the value $\Sigma(L)=n$ (one more than the minimum $n-1$) forces $a_{i_0}(L)=2$ for exactly one index $i_0$ and $a_i(L)=1$ for every $i\ne i_0$. We show this is impossible.

By Lemma~\ref{lem:rigid} applied repeatedly to $i=1,\dots,i_0-1$, the rows $1,\dots,i_0$ satisfy $\mathrm{row}_k \equiv \mathrm{row}_{i_0}+(i_0-k)\pmod n$ for $k=1,\dots,i_0$; write $\sigma=\mathrm{row}_{i_0}$. Likewise, applying Lemma~\ref{lem:rigid} to $i=i_0+1,\dots,n-1$, the rows $i_0+1,\dots,n$ satisfy $\mathrm{row}_k\equiv \mathrm{row}_{i_0+1}-(k-i_0-1)\pmod n$ for $k=i_0+1,\dots,n$; write $\tau=\mathrm{row}_{i_0+1}$.

Fix a column $c$. The $i_0$ entries in rows $1,\dots,i_0$ of column $c$ are, modulo $n$, the $i_0$ consecutive residues $\sigma(c),\sigma(c)+1,\dots,\sigma(c)+i_0-1$; the $n-i_0$ entries in rows $i_0+1,\dots,n$ of column $c$ are the $n-i_0$ consecutive residues $\tau(c)-(n-i_0-1),\dots,\tau(c)-1,\tau(c)$. Since column $c$ of a Latin square contains every residue exactly once, these two arcs of $\mathbb Z_n$, of complementary lengths $i_0$ and $n-i_0$, must be disjoint and cover $\mathbb Z_n$; being arcs of complementary length, this forces them to be literally complementary, and comparing endpoints gives $\tau(c)\;\equiv\;\sigma(c)-1 \pmod n .$
As this holds for every column $c$, we conclude $\tau\equiv\sigma-1\pmod n$ as permutations, i.e.\ $\mathrm{row}_{i_0}$ and $\mathrm{row}_{i_0+1}=\tau$ are related by exactly the rigidity relation of Lemma~\ref{lem:rigid} (with the roles of row $i_0$, row $i_0+1$ playing row $i$, row $i+1$ there). But that relation is equivalent, by the same computation as in the lemma, to $a_{i_0}(L)=1$: precisely one column (the one with $\sigma(c)=1$) ascends. This contradicts $a_{i_0}(L)=2$.

Hence $T_n(n)=0$. By the symmetry $T_n(m)=T_n(n(n-1)-m)$ following from Proposition~\ref{prop:rev}, and since $n(n-1)-\bigl((n-1)^2-1\bigr)=n$, we get $T_n\bigl((n-1)^2-1\bigr)=T_n(n)=0$ as well.
\end{proof}

Theorem~\ref{thm:isolation} explains, for every $n$, exactly the phenomenon recorded in Table~\ref{tab:gap}.

We turn to the remaining, harder part of the picture: showing that every integer strictly between $n$ and $(n-1)^2-1$ is in fact attained by $\Sigma$, and that the resulting distribution is unimodal. Here we can offer strong constructive evidence and an explicit mechanism, though not yet a complete proof for all $n$.

Fix the cyclic Latin square $C(i,j)=(i+j)\bmod n$ and, for a permutation $\pi$ of $\{0,\dots,n-1\}$, let $L_\pi$ be the Latin square whose rows, in order, are the rows of $C$ indexed by $\pi(1),\dots,\pi(n)$ (recall from the proof of Theorem~\ref{thm:bounds} that reordering the rows of any Latin square again gives a Latin square). Write $\mathrm{shift}_t(c)=(t+c)\bmod n$.

\begin{proposition}\label{prop:reorder}
For any permutation $\pi$ of $\{0, \dots, n-1\}$, the total ascents of the row-reordered cyclic square $L_\pi$ satisfy
\[
\Sigma(L_\pi) = n(n-1) - \sum_{k=1}^{n-1} \delta_k,
\]
where $\delta_k = \bigl(\pi(k+1) - \pi(k)\bigr) \bmod n \in \{1, \dots, n-1\}$.
\end{proposition}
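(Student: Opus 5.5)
Since $\Sigma(L)=\sum_{k=1}^{n-1}a_k(L)$, where $a_k$ counts the columns ascending from row $k$ to row $k+1$ (as in the proof of Theorem~\ref{thm:bounds}), it suffices to show that $a_k(L_\pi)=n-\delta_k$ for each $k$. Summing over $k=1,\dots,n-1$ then gives $\Sigma(L_\pi)=n(n-1)-\sum_k\delta_k$. This reduces the statement to a comparison of two rows of the cyclic square $C$.

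Fix $k$ and set $s=\pi(k)$, $t=\pi(k+1)$, so $s\ne t$ and $\delta=\delta_k=(t-s)\bmod n\in\{1,\dots,n-1\}$. Row $k$ of $L_\pi$ is $\mathrm{shift}_s$ and row $k+1$ is $\mathrm{shift}_t$. Write $x=(s+c)\bmod n$ for the entry of row $k$ in column $c$; as $c$ ranges over $\{0,\dots,n-1\}$, so does $x$. The entry below it is $(x+\delta)\bmod n$. Using symbols $0,\dots,n-1$ is harmless here, because shifting all symbols by $+1$ preserves comparisons.

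Column $c$ ascends exactly when $(x+\delta)\bmod n> x$, that is, when no wraparound occurs, that is, when $x+\delta\le n-1$. This happens for $x\in\{0,\dots,n-1-\delta\}$, which is $n-\delta$ values. When $x\ge n-\delta$, the lower entry is $x+\delta-n<x$, so the column descends. Hence $a_k(L_\pi)=n-\delta_k$.

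There is no real obstacle. The only care needed is with the bookkeeping: the identification of symbols with $\mathbb{Z}_n$ versus $[n]$, and the fact that the map $c\mapsto x$ is a bijection, so the count of columns equals the count of values $x$. As a sanity check, for $\pi=\mathrm{id}$ every $\delta_k=1$, which gives $\Sigma=(n-1)^2$ in agreement with Theorem~\ref{thm:bounds}.
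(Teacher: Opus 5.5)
Your proposal is correct and follows the paper's proof essentially verbatim. Both reduce to showing $a_k(L_\pi)=n-\delta_k$ via the row-pair decomposition of $\Sigma$, use $x=(\pi(k)+c)\bmod n$ as a bijection from columns onto $\mathbb{Z}_n$, and count the $n-\delta_k$ values of $x$ for which no wraparound occurs. Your remark that shifting all symbols by $+1$ preserves comparisons is a small clarification the paper leaves implicit.
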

\begin{proof}
Fix $k$ and set $a=\pi(k)$, $b=\pi(k+1)$, $\delta=(b-a)\bmod n$. As $c$ ranges over $\{0,\dots,n-1\}$, $x:=\mathrm{shift}_a(c)=(a+c)\bmod n$ ranges bijectively over $\mathbb Z_n$, and $\mathrm{shift}_b(c)=(x+\delta)\bmod n$. Comparing $x$ with $(x+\delta)\bmod n$ as ordinary integers in $[0,n-1]$: if $x<n-\delta$ then $(x+\delta)\bmod n=x+\delta>x$ (an ascent); if $x\ge n-\delta$ then $(x+\delta)\bmod n=x+\delta-n<x$ (a descent). Hence exactly $n-\delta$ of the $n$ columns ascend between row $\mathrm{shift}_a$ and row $\mathrm{shift}_b$, i.e.\ $a_k(L_\pi)=n-\delta_k$. Summing over $k=1,\dots,n-1$ gives the stated formula.
\end{proof}

Taking $\pi=\mathrm{id}$ recovers $\Sigma=(n-1)^2$ (all $\delta_k=1$), and taking $\pi(k)=n-k$ recovers $\Sigma=n-1$ (all $\delta_k=n-1$), matching Theorem~\ref{thm:bounds}. We can say exactly which values in between the row-reordering construction misses.

\begin{proposition}\label{prop:gap-exact}
For any permutation $\pi$ of $\{0,\dots,n-1\}$, $\Sigma(L_\pi) \;\equiv\; \pi(1)-\pi(n) \pmod n .$
In particular $n\nmid\Sigma(L_\pi)$: the row-reordering construction never attains a multiple of $n$.
\end{proposition}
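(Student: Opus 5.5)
Starting from Proposition~\ref{prop:reorder}, I will show that $\Sigma(L_\pi)=n(n-1)-\sum_{k=1}^{n-1}\delta_k$, where $\delta_k\equiv \pi(k+1)-\pi(k)\pmod n$. The plan is to reduce both sides modulo $n$. Since $n(n-1)\equiv 0\pmod n$, this gives $\Sigma(L_\pi)\equiv -\sum_{k=1}^{n-1}\delta_k \pmod n$. Only the residues of the $\delta_k$ matter once we work modulo $n$, so the representative $\delta_k\in\{1,\dots,n-1\}$ can be replaced by $\pi(k+1)-\pi(k)$.

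The sum then telescopes:
\[
\sum_{k=1}^{n-1}\bigl(\pi(k+1)-\pi(k)\bigr)=\pi(n)-\pi(1).
\]
Hence $\Sigma(L_\pi)\equiv -(\pi(n)-\pi(1))=\pi(1)-\pi(n)\pmod n$, which is the stated congruence.

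For the consequence, $\pi$ is a permutation of $\{0,\dots,n-1\}$ and $n\ge 2$, so $\pi(1)\ne\pi(n)$. Both values lie in $\{0,\dots,n-1\}$, so their difference is a nonzero integer of absolute value at most $n-1$ and is not divisible by $n$. Therefore $\Sigma(L_\pi)\not\equiv 0\pmod n$, so no multiple of $n$ arises from this construction.

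There is no real obstacle here. The only point needing care is that each $\delta_k$ is congruent to $\pi(k+1)-\pi(k)$ even though it is the normalized representative in $\{1,\dots,n-1\}$; this holds by definition of $\bmod$.
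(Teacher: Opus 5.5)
Your proposal is correct and is essentially the paper's own proof. You reduce the formula of Proposition~\ref{prop:reorder} modulo $n$ and replace each $\delta_k$ by its residue $\pi(k+1)-\pi(k)$, so the sum telescopes to $\pi(n)-\pi(1)$; then, since $\pi(1)\ne\pi(n)$ both lie in $\{0,\dots,n-1\}$, their difference is not divisible by $n$.
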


\begin{proof}
By Proposition~\ref{prop:reorder}, $\Sigma(L_\pi)=n(n-1)-\sum_{k=1}^{n-1}\delta_k$ with $\delta_k\equiv\pi(k+1)-\pi(k)\pmod n$. The sum telescopes modulo $n$: $\sum_{k=1}^{n-1}\delta_k\equiv\pi(n)-\pi(1)\pmod n$, so $\Sigma(L_\pi)\equiv\pi(1)-\pi(n)\pmod n$ (as $n(n-1)\equiv0$). Since $\pi(1)\ne\pi(n)$ are distinct elements of $\{0,\dots,n-1\}$, this difference is never $\equiv0\pmod n$.
\end{proof}

\begin{corollary}\label{cor:gap-exact}
Since $(n-1)^2-1=n(n-2)$, the multiples of $n$ in $[n-1,(n-1)^2]$ are exactly $n,2n,\dots,(n-2)n$, and these are precisely the values missed by the row-reordering construction. The two extreme ones, $n$ and $n(n-2)=(n-1)^2-1$, are impossible for \emph{every} Latin square by Theorem~\ref{thm:isolation}; the remaining $n-4$ values $2n,3n,\dots,(n-3)n$ lie in the open interior.
\end{corollary}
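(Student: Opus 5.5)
The corollary is mostly bookkeeping, so the plan is to verify its three assertions in turn, citing earlier results rather than re-proving anything.

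\textbf{Step 1: the arithmetic identity.} Expanding gives $(n-1)^2-1=n^2-2n=n(n-2)$. The smallest positive multiple of $n$ is $n$, and $n\ge n-1$ while $n-n<n-1$ fails only trivially, so $n$ is the first multiple of $n$ in $[n-1,(n-1)^2]$. For the upper end, $(n-1)^2=n(n-2)+1$, so the largest multiple of $n$ not exceeding $(n-1)^2$ is $n(n-2)$. Hence the multiples of $n$ in the interval are exactly $n,2n,\dots,(n-2)n$; I will assume $n\ge3$ so that this list is nonempty.

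\textbf{Step 2: these are the values missed by row-reordering.} By Proposition~\ref{prop:gap-exact}, $n\nmid\Sigma(L_\pi)$ for every $\pi$. So no multiple of $n$ is attained by the construction $L_\pi$. The word ``precisely'' also needs the converse, namely that every non-multiple of $n$ in the range is realised by some $L_\pi$. I will read this as the content indicated just before Proposition~\ref{prop:gap-exact} (``we can say exactly which values \dots\ the row-reordering construction misses''). If an argument is required, the plan is to use Proposition~\ref{prop:reorder}: choose $\pi$ with prescribed increments $\delta_k$ modulo $n$. This converse is the only step with genuine content, so it is the main obstacle; for the corollary as worded, I will cite it rather than prove it.

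\textbf{Step 3: the extremes and the interior.} Theorem~\ref{thm:isolation} gives $T_n(n)=T_n(n(n-2))=0$ for all Latin squares, using the identity from Step 1. Removing these two endpoints from the list $n,2n,\dots,(n-2)n$ leaves $2n,\dots,(n-3)n$. That is $(n-3)-2+1=n-4$ values when $n\ge4$, and an empty list for $n\le4$. Each lies strictly between $n$ and $(n-1)^2-1$, which is the open interior.
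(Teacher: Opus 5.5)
Your Steps~1 and~3, and the forward half of Step~2, are correct. They match how the paper justifies the corollary: the identity $(n-1)^2=n(n-2)+1$, Proposition~\ref{prop:gap-exact}, and Theorem~\ref{thm:isolation}. The genuine gap is the converse half of ``precisely'': that every non-multiple of $n$ in $[n-1,(n-1)^2]$ equals $\Sigma(L_\pi)$ for some $\pi$. You cannot cite this, because nothing in the paper proves it. The sentence before Proposition~\ref{prop:gap-exact} is only an announcement, and the claim is only compared with data for small $n$. Your fallback does not work as stated either. Prescribing increments $\delta_1,\dots,\delta_{n-1}\in\{1,\dots,n-1\}$ yields a permutation $\pi$ only if the partial sums are pairwise distinct modulo $n$, and controlling that is the entire difficulty.

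Here is a short way to close the gap. By Proposition~\ref{prop:reorder}, reading $\pi(k)\in\{0,\dots,n-1\}$ as integers, $a_k=n-\delta_k$ equals $n+\pi(k)-\pi(k+1)$ when $\pi(k)<\pi(k+1)$ and $\pi(k)-\pi(k+1)$ otherwise. The sum therefore telescopes to
\[
\Sigma(L_\pi)=n\,\mathrm{asc}(\pi)+\pi(1)-\pi(n),
\]
which incidentally reproves Proposition~\ref{prop:gap-exact}.

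Now write a non-multiple $m\in[n-1,(n-1)^2]$ as $m=nA+r$ with $1\le r\le n-1$ and $0\le A\le n-2$. Here $A=0$ forces $r=n-1$, and $A=n-2$ forces $r=1$. So it suffices to find $\pi$ with $\mathrm{asc}(\pi)=A$ and $\pi(1)-\pi(n)=r$.
\begin{itemize}
\item For $(A,r)=(0,n-1)$, take $(n-1,n-2,\dots,0)$.
\item For $(A,r)=(n-2,1)$, take $(1,2,\dots,n-1,0)$.
\item For $1\le A\le n-3$, set $\pi(1)=r$ and $\pi(n)=0$. If $r=n-1$, fill positions $2,\dots,n-1$ with an arrangement of $\{1,\dots,n-2\}$ having $A$ ascents. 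If $r\le n-2$, set $\pi(2)=n-1$, followed by an arrangement of $\{1,\dots,n-2\}\setminus\{r\}$ having $A-1$ ascents.
\end{itemize}
Such arrangements exist because $s$ distinct numbers can be ordered with any number of ascents in $[0,s-1]$. The junction steps contribute exactly one ascent ($r\to n-1$) in the second case and none in the first, since $n-1$ exceeds every later entry and the final $0$ is below every earlier one. This gives exactly $A$ ascents, and hence the converse for every $n$.
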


This matches the data exactly: $0$ genuine gaps for $n=4$, $1$ (namely $10$) for $n=5$, $2$ ($12,18$) for $n=6$, and $3$ ($14,21,28$) for $n=7$.

\begin{remark}
A Latin square has three classical isotopy symmetries: permuting rows, permuting columns, and permuting symbols. Restricted to the statistic $\Sigma$, these act completely differently. Column permutation changes nothing (Proposition~\ref{prop:perm} shows it only rearranges which $k_i$ occupies which slot), so for the purposes of enumerating $L_n$ or even of computing $\Sigma$ it is invisible. Row permutation changes \emph{which pairs of rows get compared}; restricted to the family $L_\pi$ this is already enough to determine $\Sigma$ by an explicit formula (Proposition~\ref{prop:reorder}) and to characterize its failure exactly (Corollary~\ref{cor:gap-exact}). Symbol permutation changes neither which pairs are compared nor where they sit, but it can flip the \emph{outcome} of a comparison (ascent versus descent) by changing what the actual values are exactly the freedom $\Sigma$ is sensitive to. This is also the one isotopy symmetry that ordinary Latin-square enumeration has no reason to track: it is always exactly $n!$-to-$1$ and irrelevant to $L_n$ itself. That it becomes the decisive tool here is, we think, the main methodological point of this section, and we expect it to be useful for other Latin-square statistics that, like $\Sigma$, depend on the actual values of the symbols rather than merely on the combinatorial pattern of the square.
\end{remark}

For $\sigma\in S_n$ (now acting on \emph{symbols} rather than rows) and $L\in\mathcal L_n$, let $\sigma(L)$ be the Latin square with $\sigma(L)(i,j)=\sigma(L(i,j))$; this is again a Latin square, since $\sigma$ is a bijection of the symbol set. Its effect on $\Sigma$ has a clean general description for \emph{any} starting square, not just $C$.

\begin{theorem}\label{thm:symbol-general}
For $L\in\mathcal L_n$ and $a\ne b\in[n]$, let
\[
N_L(a,b) \;=\; \#\{(i,c) : L(i,c)=a,\ L(i+1,c)=b\}
\]
be the number of times $a$ sits immediately above $b$ in some column. Then for every $\sigma\in S_n$,
\[
\Sigma(\sigma(L)) \;=\; \sum_{a\ne b} N_L(a,b)\cdot\mathbf{1}[\sigma(a)<\sigma(b)] .
\]
\end{theorem}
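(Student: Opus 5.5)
The plan is to compute $\Sigma$ of the relabelled square directly as a double sum over vertically adjacent cells, and then regroup that sum according to the ordered pair of symbols occupying the two cells. The starting point is the identity already used in the proof of Theorem~\ref{thm:bounds}: $\Sigma$ counts the pairs $(i,c)$ with $1\le i\le n-1$ and $c\in[n]$ at which column $c$ ascends from row $i$ to row $i+1$. Applied to $\sigma(L)$, this gives
\[
\Sigma(\sigma(L))=\sum_{i=1}^{n-1}\sum_{c=1}^{n}\mathbf{1}\bigl[\sigma(L(i,c))<\sigma(L(i+1,c))\bigr].
\]
The positions $(i,c)$ are unchanged by the relabelling; only the values being compared change.

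Next we partition the index set $\{(i,c):1\le i\le n-1,\ c\in[n]\}$ according to the ordered pair $(a,b)=(L(i,c),L(i+1,c))$. Two entries of the same column of a Latin square are distinct, so every pair that occurs has $a\ne b$. The block belonging to a given $(a,b)$ therefore has exactly $N_L(a,b)$ elements. On that block the summand is constantly $\mathbf{1}[\sigma(a)<\sigma(b)]$, because it depends only on the two symbols. Summing block by block gives the stated formula. As a sanity check, the total $\sum_{a\ne b}N_L(a,b)=n(n-1)$ is the number of adjacent vertical pairs.

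No step here is a real obstacle; the proof is a reindexing. The only point needing care is the justification that $a\ne b$, which is what allows the diagonal terms to be omitted. That justification is the column Latin property, already invoked in the proof of Theorem~\ref{thm:bounds}.

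A useful remark to add afterwards: setting $\sigma=\mathrm{id}$ recovers $\Sigma(L)=\sum_{a<b}N_L(a,b)$. In general, $\Sigma(\sigma(L))$ depends only on the tournament that $\sigma$ induces on $[n]$, weighted by the matrix $N_L$. This is the form in which the formula will be used to fill in the values $2n,\dots,(n-3)n$ left open by Corollary~\ref{cor:gap-exact}.
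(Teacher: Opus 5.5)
Your proof is correct and is the same reindexing argument the paper gives. You write $\Sigma(\sigma(L))$ as a sum of ascent indicators over the $n(n-1)$ vertically adjacent positions $(i,c)$, group these by the value pair $(L(i,c),L(i+1,c))=(a,b)$, and note that each of the $N_L(a,b)$ positions in a block contributes $\mathbf{1}[\sigma(a)<\sigma(b)]$; your explicit appeal to the column Latin property for $a\ne b$ is a detail the paper leaves implicit.

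One small caution about your closing remark: the paper uses this formula to reach the multiples $2n,\dots,(n-3)n$ only computationally for $n\le 11$, so it does not settle those values in general.
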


\begin{proof}
Position $(i,c)$ is an ascent of $\sigma(L)$ iff $\sigma(L(i,c))<\sigma(L(i+1,c))$. Grouping the $n(n-1)$ positions $(i,c)$ by the value pair $(a,b)=(L(i,c),L(i+1,c))$, exactly $N_L(a,b)$ of them share a given pair, and each contributes $1$ to $\Sigma(\sigma(L))$ iff $\sigma(a)<\sigma(b)$.
\end{proof}

Theorem~\ref{thm:symbol-general} recasts the problem in the language of directed graphs: $N_L$ is the weight matrix of a complete directed graph on vertex set $[n]$ (with $N_L(a,b)+N_L(b,a)$ edges, counted with multiplicity, between $a$ and $b$), and choosing $\sigma$ amounts to choosing a linear order on $[n]$; $\Sigma(\sigma(L))$ is then the total weight of edges respected by that order. This is an instance of the classical \emph{weighted linear arrangement} problem, and identifies the set of values attained by $\{\Sigma(\sigma(L)):\sigma\in S_n\}$ with the set of achievable weights of an acyclic orientation of $N_L$ compatible with a total order --- a well-studied but generally hard combinatorial question, here specialized to the very structured weight matrices coming from Latin squares.

A first application of Theorem~\ref{thm:symbol-general} identifies a second symmetry, besides column permutation (Proposition~\ref{prop:perm}), under which $\Sigma$ is exactly invariant.

\begin{theorem}\label{thm:shift-invariance}
For $L\in\mathcal L_n$ (with symbols $\{0,\dots,n-1\}$) and $t\in\{0,\dots,n-1\}$, let $\rho_t(x)=(x+t)\bmod n$. Then $\Sigma(\rho_t(L))=\Sigma(L)$.
\end{theorem}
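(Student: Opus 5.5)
The plan is to reduce to the case $t=1$ and then apply Theorem~\ref{thm:symbol-general} with $\sigma=\rho_1$. Since $\rho_t=\rho_1^{\,t}$ and $\rho_1(L)$ is again a Latin square, it suffices to show $\Sigma(\rho_1(L))=\Sigma(L)$ for every $L\in\mathcal L_n$. Iterating this $t$ times then gives the general statement.

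Theorem~\ref{thm:symbol-general}, applied once with $\sigma=\rho_1$ and once with $\sigma=\mathrm{id}$, gives
\[
\Sigma(\rho_1(L))-\Sigma(L)=\sum_{a\ne b}N_L(a,b)\Bigl(\mathbf 1[\rho_1(a)<\rho_1(b)]-\mathbf 1[a<b]\Bigr).
\]
I would then find which pairs $(a,b)$ have their comparison flipped by $\rho_1$.

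\emph{Pairs avoiding $n-1$.} If neither $a$ nor $b$ equals $n-1$, then $\rho_1(a)=a+1$ and $\rho_1(b)=b+1$. The order is preserved and the term vanishes.

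\emph{Pairs $(a,n-1)$.} Here $a<n-1$, so this pair was an ascent. After the shift $\rho_1(n-1)=0$ is the minimum, so it becomes a descent.

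\emph{Pairs $(n-1,b)$.} Symmetrically, these go from descent to ascent.

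Hence the difference equals
\[
\sum_{b\ne n-1}N_L(n-1,b)\;-\;\sum_{a\ne n-1}N_L(a,n-1).
\]

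The key counting step is to evaluate these two sums using the Latin property. The first sum counts the occurrences of symbol $n-1$ in rows $1,\dots,n-1$. Each such occurrence has exactly one entry directly below it, and occurrences in row $n$ have none. Symbol $n-1$ appears once in each column, hence $n$ times in total, and exactly once in row $n$ because row $n$ is a permutation. So the first sum is $n-1$.

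Likewise, the second sum counts occurrences of $n-1$ outside row $1$, which is also $n-1$. The difference is therefore $0$, which proves the claim.

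The only point needing care is the flip analysis: one must check that no other pair changes order under $\rho_1$. This holds because $\rho_1$ is order-preserving on $\{0,\dots,n-2\}$. The counting step itself is straightforward, since it uses only that symbol $n-1$ occurs exactly once in the first row and exactly once in the last row.
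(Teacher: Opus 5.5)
Your proof is correct and is essentially the paper's argument. The paper also proves the one-step invariance $\Sigma(\rho_{t+1}(L))=\Sigma(\rho_t(L))$ by noting two facts: only comparisons involving the wrapping symbol flip (its $v=n-1-t$ in $L$, which is your $n-1$ once you apply the $t=1$ case to $\rho_t(L)$), and that symbol is an upper entry $n-1$ times (gains) and a lower entry $n-1$ times (losses). Routing the flip analysis through Theorem~\ref{thm:symbol-general} is only a cosmetic repackaging of the paper's position-by-position count.
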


\begin{proof}
It suffices to show $\Sigma(\rho_{t+1}(L))=\Sigma(\rho_t(L))$ for every $t$. Fix a position $(i,c)$ and set $a=L(i,c)$, $b=L(i+1,c)$, $g=(b-a)\bmod n$. As in the proof of Proposition~\ref{prop:reorder}, writing $x=(a+t)\bmod n$, position $(i,c)$ is an ascent under $\rho_t$ iff $x\le n-g-1$. Replacing $t$ by $t+1$ replaces $x$ by $(x+1)\bmod n$, so the ascent is \emph{lost} exactly when $x=n-g-1$, i.e.\ (since $a+g\equiv b\pmod n$) $b\equiv n-1-t\pmod n$, and \emph{gained} exactly when $x=n-1$, i.e.\ $a\equiv n-1-t\pmod n$.

Write $v=(n-1-t)\bmod n$: a loss occurs at $(i,c)$ iff $L(i+1,c)=v$, a gain iff $L(i,c)=v$. Since $v$ occurs exactly once in each of the $n$ rows, its $n-1$ occurrences in rows $1,\dots,n-1$ each serve as the upper entry of exactly one row-pair (a gain), and its $n-1$ occurrences in rows $2,\dots,n$ each serve as the lower entry of exactly one row-pair (a loss). Hence exactly $n-1$ gains and $n-1$ losses occur, and they cancel.
\end{proof}

Thus the cyclic group of $n$ ``value rotations'' is, like column permutation, entirely invisible to $\Sigma$; it is the remaining $n!-n$ symbol permutations, transpositions included, that do the work below.

\begin{corollary}\label{prop:symbol}
Let $C$ be the cyclic Latin square and $D(\sigma)=\#\{i\in\mathbb Z_n : \sigma(i)<\sigma(i+1\bmod n)\}$ the number of \emph{cyclic ascents} of $\sigma$. Then $\Sigma(\sigma(C)) = (n-1)\,D(\sigma)$.
\end{corollary}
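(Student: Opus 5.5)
The plan is to apply Theorem~\ref{thm:symbol-general} to $L=C$, which reduces the claim to computing the transition counts $N_C(a,b)$. Symbols are labelled by $\mathbb Z_n=\{0,\dots,n-1\}$, so $C(i,c)=(i+c)\bmod n$ with rows $i=0,\dots,n-1$.

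\emph{Step 1: compute $N_C$.} Vertically adjacent entries in $C$ are $C(i,c)=(i+c)\bmod n$ and $C(i+1,c)=(i+c+1)\bmod n$. Hence every vertical pair has the form $(x,x+1\bmod n)$, so $N_C(a,b)=0$ unless $b\equiv a+1\pmod n$. For a fixed symbol $a$, the positions $(i,c)$ with $0\le i\le n-2$ and $C(i,c)=a$ are one per row among rows $0,\dots,n-2$. This gives exactly $n-1$ of them, so $N_C(a,a+1)=n-1$ for every $a\in\mathbb Z_n$.

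\emph{Step 2: substitute.} Theorem~\ref{thm:symbol-general} then gives
\[
\Sigma(\sigma(C))=\sum_{a\in\mathbb Z_n}(n-1)\,\mathbf 1[\sigma(a)<\sigma(a+1\bmod n)]=(n-1)D(\sigma).
\]
Here $\sigma$ is viewed as acting on $\mathbb Z_n$ with values compared as integers, which is consistent with the identification $[n]\leftrightarrow\mathbb Z_n$ used for $C$. The only thing to check is that the order comparison in Theorem~\ref{thm:symbol-general} is made on the relabelled symbols $\sigma(a)$. Any fixed order-preserving identification of the symbol set does not affect ascents, so this is harmless.

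The only potential obstacle is the counting in Step~1. Each symbol occurs once in every row, and the bottom row contributes no pair, so the count is $n-1$ uniformly, including for the wraparound pair $(n-1,0)$. As a sanity check, for $\sigma=\mathrm{id}$ we get $D=n-1$ and $\Sigma=(n-1)^2$, matching Theorem~\ref{thm:bounds}. For the reversal, $D=1$ and $\Sigma=n-1$.
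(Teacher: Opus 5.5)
Your proof is correct and follows the paper's argument: you apply Theorem~\ref{thm:symbol-general} to $C$ after showing that $N_C(a,a+1\bmod n)=n-1$ and $N_C(a,b)=0$ for all other pairs. The only difference is cosmetic: you count the $n-1$ occurrences row by row (one copy of $a$ in each of the top $n-1$ rows), whereas the paper counts column by column (the pair is adjacent in every rotation except the one starting at $a+1$).
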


\begin{proof}
A direct computation gives $N_C(a,a{+}1\bmod n)=n-1$ for every $a$, and $N_C(a,b)=0$ otherwise: column $c$ of $C$ is the rotation of $(0,\dots,n-1)$ starting at $c$, so the pair $(a,a{+}1\bmod n)$, which is adjacent in every rotation except the one that happens to start exactly at $a{+}1\bmod n$, occurs in exactly $n-1$ of the $n$ columns, and no other value pair occurs at all. Substituting into Theorem~\ref{thm:symbol-general} gives $\Sigma(\sigma(C))=\sum_a N_C(a,a{+}1)\mathbf{1}[\sigma(a)<\sigma(a{+}1\bmod n)] = (n-1)D(\sigma)$.
\end{proof}

Since $D(\sigma)$ ranges over all of $\{1,\dots,n-1\}$, Corollary~\ref{prop:symbol} alone only realizes the multiples of $n-1$ in $[n-1,(n-1)^2]$. A single, much smaller move does dramatically better.

\begin{remark}\label{rem:no-intercalates}
A classical local move on Latin squares is the \emph{intercalate switch}: replacing a $2\times2$ subsquare $\begin{smallmatrix}a&b\\b&a\end{smallmatrix}$ (rows $i,i'$, columns $j,j'$) by $\begin{smallmatrix}b&a\\a&b\end{smallmatrix}$. This does not apply uniformly here: the number of intercalates in a Latin square is an isotopy invariant, and the cyclic Cayley table $C$ of $\mathbb Z_n$ and hence \emph{every} $L_\pi$, since row-reordering is an isotopy has no intercalates at all when $n$ is odd, because an intercalate corresponds to an element of order $2$ in the group, and $\mathbb Z_n$ has none for odd $n$ (we verified this directly for $n=3,5,7$: $C$ has $0$, $0$, $0$ intercalates, against $4$ and $9$ for $n=4,6$). Symbol transpositions, being a global rather than local move, are available for every $n$ and every $L_\pi$, which is why we use them instead.
\end{remark}

\begin{proposition}\label{prop:transposition}
For $n\le 11$, combining row-reordering with a single transposition of two symbols already attains every value of $\Sigma$ in $[n-1,(n-1)^2]$ \emph{except} the two values $n$ and $(n-1)^2-1$ shown impossible by Theorem~\ref{thm:isolation}. That is, for every $m$ in this range with $m\ne n,(n-1)^2-1$, there exist a permutation $\pi$ and a transposition $\tau=(a\,b)\in S_n$ with $\Sigma(\tau(L_\pi))=m$.
\end{proposition}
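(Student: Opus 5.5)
The statement concerns only $n\le 11$, so the proof will be a finite verification. I would organize that verification so that most of it is explained structurally and the computer is needed only for residue bookkeeping.

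\textbf{Step 1: the row-reordering values.} By Proposition~\ref{prop:reorder}, $\Sigma(L_\pi)=n(n-1)-\sum_k\delta_k$, where $\delta_k=(\pi(k+1)-\pi(k))\bmod n$. I would first show that the multiset of achievable gap sums $\sum_k\delta_k$ over $\pi\in S_{\{0,\dots,n-1\}}$ is exactly the set of integers in $[n-1,n(n-1)-(n-1)]$ that are $\not\equiv 0\pmod n$. Proposition~\ref{prop:gap-exact} already excludes the multiples of $n$. For the reverse inclusion I would build $\pi$ greedily as a walk on $\mathbb Z_n$ visiting every vertex once with prescribed step lengths. A concrete family is to concatenate a run of steps $+1$ with a run of steps $-1$ (that is, $+(n-1)$), with a single transitional jump. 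This gives sums $(n-1)+j(n-2)+\dots$, and varying the jump length fills in the residues. With the identity $\pi$ and the transposition $\tau=\mathrm{id}$, this settles every non-multiple of $n$.

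\textbf{Step 2: the multiples of $n$.} It remains to hit $m=jn$ for $2\le j\le n-3$, and here a genuine transposition is needed. I would use Theorem~\ref{thm:symbol-general} with $L=L_\pi$. The comparison matrix $N_{L_\pi}$ is explicit: in rows $k,k+1$ the pairs are $(x,x+\delta_k)$ for all $x\in\mathbb Z_n$. So $N_{L_\pi}(a,b)=\#\{k:\,b-a\equiv\delta_k\}$, a circulant. Swapping symbols $a$ and $b$ changes $\Sigma$ only through pairs involving $a$ or $b$. The change $\Delta$ equals
\[
\sum_{x}\bigl(N(a,x)\,[\,\text{flip}\,]-\dots\bigr),
\]
which is computable from the differences $x-a$ and $x-b$ modulo $n$. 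I would pick $\pi$ from Step 1 with $\Sigma(L_\pi)=jn\pm1$ and a transposition of adjacent symbols $(a\;a{+}1)$. For adjacent symbols only the single comparison between $a$ and $a+1$ flips order. That changes $\Sigma$ by $N(a,a{+}1)-N(a{+}1,a)$, which is the number of $k$ with $\delta_k=1$ minus the number with $\delta_k=n-1$. Choosing $\pi$ so that this difference is exactly $\mp1$ moves $jn\pm1$ to $jn$.

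\textbf{Step 3: verification.} The step I expect to be the main obstacle is guaranteeing, uniformly in $j$, a $\pi$ that has the right gap sum \emph{and} the right imbalance between steps $+1$ and $-1$. Since the statement is restricted to $n\le11$, I would close this step by exhaustive search. For each $n\le 11$ I would enumerate $\pi$ up to the cyclic shift symmetry of Theorem~\ref{thm:shift-invariance} (fixing $\pi(1)=0$). For each of the $\binom n2$ transpositions I would compute $\Sigma(\tau(L_\pi))$ in $O(n^2)$ time via $N_{L_\pi}$. I would then record that every $m\in[n-1,(n-1)^2]\setminus\{n,(n-1)^2-1\}$ occurs. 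The search can stop early once all targets are hit, which happens quickly in practice, so full enumeration of $10!$ orderings is not required.
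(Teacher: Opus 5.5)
Your argument ultimately rests on Step 3: a finite computer search over row orders $\pi$ and the $\binom n2$ genuine transpositions that records a witness for every target $m$. This is the same approach as the paper, which checks the proposition exhaustively for $n=5,\dots,8$ and by random sampling of $\pi$ for $n=9,10,11$; for an existence claim, any way of finding witnesses suffices.

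Steps 1--2 should be treated as heuristics, not as part of the proof.
\begin{itemize}
\item In Step 1, a walk made of one $+1$-run, one jump and one $-1$-run is Hamiltonian on $\mathbb Z_n$ only if the two runs cover complementary arcs. That fixes both endpoints of the jump, so there is no jump length to vary. The step sums are then always multiples of $n-1$: with $r$ steps $+1$ first, $\sum_k\delta_k=(n-1)(n-1-r)$. So this family gives only $\Sigma(L_\pi)\in\{n-1,2(n-1),\dots,(n-1)^2\}$, the values of Corollary~\ref{prop:symbol}. For $n=5$ it already misses $6,7,9,11,13,14$.
\item Also, $\tau=\mathrm{id}$ is not a transposition, and the statement requires one. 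So the non-multiples of $n$ must also be certified by the Step 3 search. That is fine as long as you run it over all targets, not only the multiples of $n$.
\item In Step 2 the sign is reversed. Swapping $a$ and $a+1$ changes $\Sigma$ by $N(a{+}1,a)-N(a,a{+}1)=M_\pi(n-1)-M_\pi(1)$, which is the case $d=1$ of Proposition~\ref{prop:delta-gap}.
\item Fixing $\pi(1)=0$ is legitimate, but for a different reason than the one you cite. Replacing $\pi$ by $\pi+t \pmod n$ rotates the columns of $L_\pi$, so $\tau(L_{\pi+t})$ is a column rotation of $\tau(L_\pi)$. Theorem~\ref{thm:shift-invariance} alone only gives $\Sigma(L_{\pi+t})=\Sigma(L_\pi)$ before $\tau$ is applied.
\end{itemize}
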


We verified this exhaustively for $n=5,6,7,8$ (searching over all $\pi\in S_n$ and all $\binom n2$ transpositions $\tau$) and, by random sampling of $\pi$ (a few thousand trials sufficed in each case), for $n=9,10,11$; Table~\ref{tab:reorder-gaps} recalls, for comparison, the small set of values that row-reordering must patch. In every case checked, a single transposition applied to a suitable $L_\pi$ suffices --- no general $\sigma$, and no second row-reordering, is needed. We also tested whether a single \emph{longer} cycle, iterated, could sweep through the missing values more systematically (in the spirit of Theorem~\ref{thm:shift-invariance}): cyclically shifting a proper subset of symbols sometimes helps, but with no uniform pattern we could identify, and never more simply than a well-chosen transposition.

The effect of a transposition on $\Sigma(L_\pi)$ has an exact, and strikingly simple, closed form.

\begin{proposition}\label{prop:delta-gap}
Fix $\pi$ and let $M_\pi(\delta)=\#\{k: (\pi(k{+}1)-\pi(k))\bmod n=\delta\}$ be as in Proposition~\ref{prop:reorder}. For $0\le e\le n-1$ set $f(e)=M_\pi((n-e)\bmod n)-M_\pi(e)$ and $F(d)=\sum_{e=1}^d f(e)$ (so $F(0)=0$). For a transposition $\tau=(p\,q)$ with $p<q$ and gap $d=q-p$,
\[
\Sigma(\tau(L_\pi)) - \Sigma(L_\pi) \;=\; F(d-1)+F(d),
\]
which depends on $p,q$ only through the gap $d$.
\end{proposition}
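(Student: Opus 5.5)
The plan is to derive the formula directly from Theorem~\ref{thm:symbol-general}, after first computing the transition matrix $N_{L_\pi}$ explicitly; I use symbols $\{0,\dots,n-1\}$ as in Proposition~\ref{prop:reorder}.

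\emph{Step 1: compute $N_{L_\pi}$.} Rows $k$ and $k+1$ of $L_\pi$ are $\mathrm{shift}_{\pi(k)}$ and $\mathrm{shift}_{\pi(k+1)}$. As in the proof of Proposition~\ref{prop:reorder}, with $\delta_k=(\pi(k+1)-\pi(k))\bmod n$, the vertical pairs occurring between these rows are exactly $(x,(x+\delta_k)\bmod n)$, one for each $x\in\mathbb Z_n$. Summing over $k$ gives
\[
N_{L_\pi}(a,b)=M_\pi\bigl((b-a)\bmod n\bigr)\qquad(a\ne b).
\]

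\emph{Step 2: localise the change.} Applying Theorem~\ref{thm:symbol-general} to both $\sigma=\mathrm{id}$ and $\sigma=\tau$ and subtracting gives
\[
\Sigma(\tau(L_\pi))-\Sigma(L_\pi)=\sum_{a\ne b}N(a,b)\bigl(\mathbf 1[\tau(a)<\tau(b)]-\mathbf 1[a<b]\bigr).
\]
Only unordered pairs $\{a,b\}$ whose relative order is reversed by $\tau=(p\,q)$ contribute. I will check that these are exactly:
\begin{itemize}
\item the pair $\{p,q\}$ itself;
\item the pairs $\{p,r\}$ and $\{r,q\}$ with $p<r<q$.
\end{itemize}
Any $r<p$ or $r>q$ keeps its order relative to both $p$ and $q$, and pairs avoiding $p,q$ are untouched. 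For a flipped pair with $a<b$, the contribution is $N(b,a)-N(a,b)=M_\pi((a-b)\bmod n)-M_\pi(b-a)=f(b-a)$. Here I use that $0<b-a<n$, so $(a-b)\bmod n=n-(b-a)$.

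\emph{Step 3: sum the contributions.} The pair $\{p,q\}$ gives $f(d)$. Writing $r=p+e$ with $1\le e\le d-1$, the pair $\{p,r\}$ gives $f(e)$ and the pair $\{r,q\}$ gives $f(d-e)$. Hence the total is
\[
f(d)+\sum_{e=1}^{d-1}\bigl(f(e)+f(d-e)\bigr)=f(d)+2F(d-1)=F(d)+F(d-1).
\]
This visibly depends on $p,q$ only through $d$.

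The main point requiring care is Step 2: getting the sign convention right (the flipped pair switches which orientation counts as an ascent) and confirming that no pair involving $r$ outside $(p,q)$ changes. Step 1 is immediate from the computation already done in Proposition~\ref{prop:reorder}.
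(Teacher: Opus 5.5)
Your proof is correct and follows essentially the same route as the paper. It applies Theorem~\ref{thm:symbol-general} together with the circulant identity $N_{L_\pi}(a,b)=M_\pi((b-a)\bmod n)$, notes that only $\{p,q\}$ and the pairs $\{p,r\},\{r,q\}$ with $p<r<q$ change relative order (each contributing $f$ of its gap), and sums to $f(d)+2F(d-1)=F(d-1)+F(d)$; your explicit derivation of the circulant identity and your check that symbols $r$ outside $(p,q)$ contribute nothing spell out two points the paper's proof leaves implicit.
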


\begin{proof}
By Theorem~\ref{thm:symbol-general}, $\Sigma(\tau(L_\pi))-\Sigma(L_\pi)=\sum_{a\ne b}N_{L_\pi}(a,b)\bigl(\mathbf{1}[\tau(a)<\tau(b)]-\mathbf{1}[a<b]\bigr)$, and every term with $\{a,b\}\cap\{p,q\}=\emptyset$ vanishes since $\tau$ fixes $a,b$ there. Using the circulant identity $N_{L_\pi}(a,b)=M_\pi((b-a)\bmod n)$ from the proof of Proposition~\ref{prop:reorder}, and checking each remaining pair directly: for $x=p+e$ with $1\le e\le d-1$ (strictly between $p$ and $q$), the two terms from $\{p,x\}$ together contribute $M_\pi(n-e)-M_\pi(e)=f(e)$, and the two terms from $\{q,x\}$ together contribute $M_\pi(n-(d-e))-M_\pi(d-e)=f(d-e)$; the pair $\{p,q\}$ itself contributes $M_\pi(n-d)-M_\pi(d)=f(d)$. Summing over $x$,
\[
\sum_{e=1}^{d-1}\bigl(f(e)+f(d-e)\bigr) + f(d) \;=\; 2\sum_{e=1}^{d-1}f(e)+f(d) \;=\; F(d-1)+F(d). \qedhere
\]
\end{proof}

We checked Proposition~\ref{prop:delta-gap} numerically against direct computation for random $\pi$ and all $n\le 11$ tested above, with no exceptions. It reduces Proposition~\ref{prop:transposition} to a concrete, self-contained question: as $\pi$ ranges over $S_n$ (equivalently, as $M_\pi$ ranges over the difference-multisets of Hamiltonian paths on the circulant graph on $\mathbb Z_n$), does $\Sigma(L_\pi)+F(d-1)+F(d)$ hit every multiple of $n$ strictly between $n$ and $(n-1)^2-1$, for some choice of $d$? We verified this for $n\le 11$ but were unable to settle it in general; for instance, restricting to the block-reversal family of Proposition~\ref{prop:reorder}'s $\Sigma(L_{\pi_\ell})=(n-1)(n-\ell)$ (used for other purposes above) gives only $4$ possible values of $F(d-1)+F(d)$ per $\ell$, which is not enough to cover every target for large $n$, so a general proof would need a richer family of $\pi$ than block reversals.

\begin{table}[h]
\centering
\begin{tabular}{@{}c|l@{}}
\toprule
$n$ & values of $m\in\{n+1,\dots,(n-1)^2-2\}$ missed by row-reordering alone (Corollary~\ref{cor:gap-exact}) \\
\midrule
$4$ & --- (none) \\
$5$ & $10$ \\
$6$ & $12,\ 18$ \\
$7$ & $14,\ 21,\ 28$ \\
$8$ & $16,\ 24,\ 32$ \\
\bottomrule
\end{tabular}
\caption{The exact gaps of the row-reordering construction (Corollary~\ref{cor:gap-exact}); by Proposition~\ref{prop:transposition}, each is attained by $\tau(L_\pi)$ for a suitable permutation $\pi$ and a single transposition $\tau$.}
\label{tab:reorder-gaps}
\end{table}

\begin{conjecture}\label{conj:interior}
For every $n\ge 3$ and every integer $m$ with $n+1\le m\le (n-1)^2-2$, $T_n(m)>0$; moreover the sequence $\bigl(T_n(m)\bigr)_{n+1\le m\le (n-1)^2-2}$ is unimodal. In particular, every such $m$ is attained by $\tau(L_\pi)$ for some permutation $\pi$ and some transposition (or the identity) $\tau$.
\end{conjecture}

\begin{remark}
Theorem~\ref{thm:isolation} and Corollary~\ref{cor:gap-exact} completely settle two of the three pieces of Conjecture~\ref{conj:interior}: the extremal values $n,(n-1)^2-1$ are impossible for every Latin square, and every value of $\Sigma$ in the interior \emph{other than a multiple of $n$} is already attained by row-reordering alone. Precisely two things remain:
\begin{enumerate}
\item[(i)] that every multiple of $n$ strictly between $n$ and $(n-1)^2-1$ is attained by \emph{some} Latin square which Proposition~\ref{prop:delta-gap} reduces to an explicit, checkable question about the function $F$, verified for $n\le 11$ but open in general;
\item[(ii)] unimodality of $\bigl(T_n(m)\bigr)_{n+1\le m\le(n-1)^2-2}$, on which this note makes no progress beyond the data of Table~\ref{tab:gap} and \S3 for $n\le5$.
\end{enumerate}
More generally, any two Latin squares $L\ne L'$ of the same order agree outside a common \emph{Latin trade}: the cells where they differ, restricted to which both squares use identical sets of symbols in every row and column (an easy consequence of both being Latin). Applying the whole trade turns $L$ into $L'$, but a proof of (i) and (ii) together would follow from showing that a trade connecting two squares with extreme values of $\Sigma$ can always be decomposed into small sub-trades across which $\Sigma$ changes by a bounded, non-skipping amount; deciding when a Latin trade decomposes into smaller trades is itself an active topic in the combinatorics of Latin squares, independent of the present application.
\end{remark}

\section{Concluding remarks}

The Latin Eulerian numbers $\LE{n\atop\mathbf k}$ are a natural, apparently new, multivariate refinement of the classical Eulerian numbers, tailored to the combinatorics of Latin squares. Beyond their intrinsic interest as a permutation-statistic on Latin squares, we note a possible use for the (still unsolved) problem of enumerating $L_n$: any structural understanding of the distribution of $\Sigma$ or of the full array in particular unimodality or concentration results as in Conjecture~\ref{conj:interior} feeds naturally into entropy-based counting arguments of the type used by Linial and Luria~\cite{LinialLuria} for bounding $L_n$, since such arguments typically require control on how ``spread out'' a natural statistic on Latin squares can be. We hope the present note stimulates further work in this direction, in particular a proof of Conjecture~\ref{conj:interior} and an analogous study of other classical Latin-square statistics (numbers of intercalates and other substructures, as in the recent work of Kwan, Sah, Sawhney and Simkin~\cite{KwanSahSawhneySimkin}; cycle structure of rows, etc.) refined by the Eulerian ascent statistic.
\section*{Acknowledgments}
The authors acknowledge the use of artificial intelligence, specifically the Claude model 5, to assist with the exploratory, proof-development, editing, and revision stages of this manuscript. All AI-generated suggestions were critically evaluated, substantially revised, and independently verified by the authors using Wolfram Mathematica. The authors assume full responsibility for the entirety of the mathematical content. This use of AI is in strict accordance with current academic editorial standards regarding authorial responsibility, ethics, and transparency.

\end{document}